\documentclass[12pt,fleqn]{article}
\usepackage{amsmath,amssymb,amsthm,enumerate}
\usepackage{graphics,graphicx,float,caption,subcaption}
\newtheorem{thm}{Theorem }
\newtheorem{lem}{Lemma}

\newtheorem{cor}{Corollary}
\theoremstyle{definition}
\newtheorem{defn}{Definition} 
\newtheorem{exmp}{Example} 
\newtheorem{remark}{Remark}
\renewcommand{\Re}{\operatorname{Re}}
\renewcommand{\Im}{\operatorname{Im}}

\author{}
\date{ }

\begin{document}
	\newcommand{\norm}[1]{\lVert #1\rVert}
	\newcommand{\mnorm}[1]{\lvert #1\rvert}
	\newcommand{\ml}[1]{E_{\alpha}(#1)}
	\newcommand{\img}[1]{\mathrm{Img}\{#1\}}
	\newcommand{\mltwo}[3]{E_{#1, #2}(t^{#1} #3)}
	\newcommand{\pder}[2]{\dfrac{\partial^{#1}}{\partial #2^{#1}}}
\begin{center}
	{\Large Analysis of intersections of trajectories of linear systems of fractional order\\}
	{\large Amey S. Deshpande\footnotemark[1]  \footnotemark[2],  \space Varsha  Daftardar-Gejji\footnotemark[3]  \footnotemark[4] \\and Palaniappan Vellaisamy\footnotemark[1]  \footnotemark[5]}
\end{center}
	\footnotetext[1]{Department of Mathematics, IIT Bombay, Mumbai-400076}
	\footnotetext[2]{Email: 2009asdeshpande@gmail.com, ameyd@math.iitb.ac.in}
	\footnotetext[3]{Department of Mathematics, Savitribai Phule Pune University, Pune - 411007}
	\footnotetext[4]{Email: vsgejji@gmail.com, vsgejji@unipune.ac.in}
	\footnotetext[5]{Email: pv@math.iitb.ac.in}

\begin{abstract}
	Present article deals with trajectorial intersections in linear fractional systems (`systems'). We propose a classification of intersections of trajectories in three classes \textit{viz.} trajectories intersecting at same time(EIST), trajectories intersecting at distinct times(EIDT) and self intersections of a trajectory. We prove a generalization of separation theorem for the case of linear fractional systems. This result proves existence of EIST. Based on the presence of EIST, systems are further classified in two types; Type I and Type II systems, which are analyzed further for EIDT. Besides constant solutions and limit-cycle behavior, a fractional trajectory can have nodal or cuspoidal intersections with itself. We give a necessary and sufficient condition for a trajectory to have such types of intersections.    
\end{abstract}
\section{Introduction}

Study of fractional differential equations have seen increasing interest due to their applications in diverse fields \cite{hilfer2000applications, newworld}. For a detailed introduction to fractional calculus and fractional differential equations, we refer readers to \cite{podlubny1999fractional, diethelm2010analysis}. For a brief survey of the work in fractional systems refer to \cite{stabsurvey, Stamova2017}.

Present article deals with an $n$-dimensional autonomous linear fractional system with Caputo fractional derivative (referred as system)
\begin{equation} \label{ivp0}
D^{\alpha} x(t) = A x(t), ~~ t \geq 0, ~ 0 < \alpha <1,~ A \in \mathbb{R}^{n \times n},
\end{equation}   and studies dynamics of its solution (referred as trajectory) for $0 < t < \infty$. 

The question of intersections of trajectories of a fractional systems has been dealt before \cite{diethelm2012volterra, agarwal2010survey, hayek1998extension, bonilla2007systems, diethelm2010analysis, cong2017generation}. Diethelm \textit{et al.} \cite{diethelm2010analysis} have proved that for one dimensional fractional system, two distinct trajectories do not intersect each other at the same time. This result is also known as \textit{separation theorem} for fractional systems. They have also observed that fractional trajectories can still intersect each other at distinct times because of their inherent non-local nature. Recently, Cong. \textit{et al.} \cite{cong2017generation} have dealt with this question and generalized separation theorem for higher dimensional triangular fractional systems. Moreover, they have also constructed an example of a fractional system for which separation theorem does not work. 

Pursuance to this we generalize separation theorem for linear fractional systems and investigate whether solutions of fractional system \eqref{ivp0} intersect each other. Further we propose a classification of types of intersections and for each type give existence result. 
Such study is important for deriving deeper insights and understanding of intrinsic dynamics of fractional systems. This should lead to successful modelling of some physical phenomena using fractional systems.

The rest of the article is organized as follows. Section \ref{preliminaries} introduces preliminaries from fractional calculus and notations used throughout this article. Section \ref{fracdyn} categorizes possible intersections in fractional systems in to three broad categories. Section \ref{zerotrajectory} deals with intersections of zero trajectory. Section \ref{externalintersections} explores two or more trajectorial intersections.  Section \ref{selfintersections} deals with intersection of single trajectory with itself. Section \ref{conclusions} summarizes findings and conclusions and outlines some directions of future research.  
    
\section{Preliminaries} \label{preliminaries}
In this section, we introduce some preliminaries from fractional calculus. For more details, we refer the readers to  \cite{podlubny1999fractional, diethelm2010analysis, daftardar2007analysis}.

\begin{defn}
	The Riemann-Liouville fractional integral of order $\alpha > 0$ of $f \in C[0, \infty)$ is defined as
	\begin{align}
	I^{\alpha}f(t)=\frac{1}{\Gamma(\alpha)}\int\limits_{0}^{t}\frac{f(\tau)}{(t-\tau)^{1-\alpha}}d\tau.
	\end{align}
\end{defn}
\begin{defn}
	The Caputo derivative of order $\alpha \in (k-1, k], ~ k \in \mathbb{N}$ of $f \in C^k(0, \infty)$ is defined as 
	\begin{equation}
	D^{\alpha}f(t) =
	\begin{cases}
	
	\frac{1}{\Gamma(k-\alpha)} ~\int_{0}^{t} (t-\tau)^{k-\alpha-1} f^{(k)}(\tau)\: d\tau,~~ &\alpha \in (k-1,k),  \\
	 f^{(k)}(t), ~~ & \alpha = k.
	 \end{cases}
	\end{equation}
\end{defn}

If $f:[0, \infty) \rightarrow \mathbb{R}^n$, where $f = (f_1, f_2, \cdots, f_n)$, $f_i: [0, \infty) \rightarrow \mathbb{R}$, then $D^{\alpha} f = (D^{\alpha}f_1, D^{\alpha} f_2, \cdots, D^{\alpha} f_n)$. 

Let $f:[0, \infty) \times \mathbb{R}^n \rightarrow \mathbb{R}^n$ and $\alpha > 0$, then
\begin{equation} \label{fde1}
D^{\alpha} x(t) = f(t, x(t)),
\end{equation} denotes a system of fractional differential equations. The system in \eqref{fde1} is autonomous if $f$ does not explicitly depend on $t$ and linear if $f$ is linear in $x(t)$. For $0 < \alpha \leq 1$, the system in \eqref{fde1} along with initial condition $x(0)= x_0 \in \mathbb{R}^n$ constitutes fractional initial value problem (IVP).

In this article, we restrict ourselves to the following IVP consisting of linear fractional autonomous system, with $0 < \alpha <1$,
	\begin{equation} \label{ivp1}
	\begin{split}
	D^{\alpha} x(t) &= A x(t), ~A \in \mathbb{R}^{n \times n},\: x(t) \in \mathbb{R}^n,\: t \geq 0,\\
	x(0) &= x_0. 
	\end{split}
	\end{equation}
	

\begin{defn}
	The two parameter Mittag-Leffler(M-L) function is defined as 
	\begin{equation}
	 E_{\alpha, \beta}(z) = \sum_{k=0}^{\infty} \; \frac{z^{k}}{\Gamma(\alpha k +\beta)}, ~ z \in \mathbb{C}, ~ 0 < \alpha \leq 1, ~ \beta \in \mathbb{R}.
	\end{equation} For $\beta=1$, $E_{\alpha, 1}(z) = E_{\alpha}(z)$ is the classic Mittag-Leffler function. When $\alpha=1$ then $E_{1, 1}(z) = e^z$.
 \end{defn}

\begin{defn}
	The two parameter Mittag-Leffler matrix function (or M-L operator) is defined as 
	\begin{equation}
	E_{\alpha, \beta}(A) = \sum_{k=0}^{\infty} \; \frac{A^{k}}{\Gamma(\alpha k +\beta)}, ~ A \in \mathbb{R}^{n \times n}, ~ 0 < \alpha \leq 1, ~ \beta \in \mathbb{R}.
	\end{equation}
\end{defn}
\begin{thm}[Global existence and Uniqueness \cite{daftardar2004analysis}] \label{existence}
	The solution $x(t) = E_{\alpha}(t^{\alpha} A) x_0,$   $t \geq 0,$ is the unique solution of IVP in \eqref{ivp1}.
\end{thm}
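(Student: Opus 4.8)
The plan is to recast the IVP \eqref{ivp1} as an equivalent Volterra integral equation, verify directly that the proposed Mittag--Leffler expression solves it, and then establish uniqueness by a contraction-type iteration. Recall the standard fact that for $0 < \alpha < 1$ and continuous right-hand side, the Caputo IVP $D^{\alpha} x(t) = A\,x(t)$, $x(0) = x_0$, is equivalent to
\begin{equation} \label{volterra}
x(t) = x_0 + I^{\alpha}\bigl(A\,x\bigr)(t) = x_0 + \frac{1}{\Gamma(\alpha)} \int_0^t (t-\tau)^{\alpha - 1} A\,x(\tau)\, d\tau,
\end{equation}
so it suffices to work with \eqref{volterra} throughout.

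For existence I would verify that $x(t) = \ml{t^{\alpha} A}\,x_0$ satisfies \eqref{ivp1}. Writing the matrix series $\ml{t^{\alpha}A} = \sum_{k=0}^{\infty} A^k t^{\alpha k}/\Gamma(\alpha k + 1)$ and using the power rule $D^{\alpha} t^{\beta} = \Gamma(\beta+1)\,t^{\beta - \alpha}/\Gamma(\beta - \alpha + 1)$ for $\beta > 0$ (with $D^{\alpha}$ annihilating the constant $k=0$ term), a term-by-term application of the Caputo derivative gives
\begin{equation} \label{mlderiv}
D^{\alpha} x(t) = \sum_{k=1}^{\infty} \frac{A^k t^{\alpha(k-1)}}{\Gamma(\alpha(k-1)+1)}\,x_0 = A \sum_{j=0}^{\infty} \frac{A^j t^{\alpha j}}{\Gamma(\alpha j + 1)}\,x_0 = A\,x(t),
\end{equation}
after reindexing $j = k-1$. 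The initial condition holds since only the $k=0$ term survives at $t=0$, giving $x(0) = \ml{0}\,x_0 = x_0$. The point needing care is the interchange of $D^{\alpha}$ with the infinite sum; this rests on the fact that the matrix Mittag--Leffler series is entire in $t^{\alpha}$, so that it and its formally differentiated series converge uniformly on every compact $[0,T]$, which legitimizes the term-by-term differentiation in \eqref{mlderiv}.

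For uniqueness, suppose $x$ and $y$ both solve \eqref{volterra} and set $z = x - y$. Then $z(0) = 0$ and $z = A\,I^{\alpha} z$. Iterating this identity and using the semigroup property $I^{\alpha} I^{\alpha} = I^{2\alpha}$ of the Riemann--Liouville integral yields $z = A^n I^{n\alpha} z$ for every $n$. Fixing $T > 0$ and bounding $\norm{z}$ by its maximum $M$ on $[0,T]$, a direct estimate of the fractional integral gives
\begin{equation} \label{uniqbound}
\norm{z(t)} \leq M\,\norm{A}^n \frac{t^{n\alpha}}{\Gamma(n\alpha + 1)}, \qquad 0 \leq t \leq T.
\end{equation}
Since $(\norm{A}\,T^{\alpha})^n / \Gamma(n\alpha + 1) \to 0$ as $n \to \infty$ (this is exactly the summand decay underlying convergence of the Mittag--Leffler series), the right-hand side of \eqref{uniqbound} tends to $0$, forcing $z \equiv 0$ on $[0,T]$; as $T$ is arbitrary, $x = y$ on $[0,\infty)$.

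The main obstacle I anticipate is the analytic bookkeeping in the existence step: making the term-by-term Caputo differentiation in \eqref{mlderiv} rigorous requires the uniform-convergence justification above, and one must confirm that the power rule for $D^{\alpha} t^{\alpha k}$ behaves well enough in $k$ that the differentiated series is dominated on compact intervals. Once that interchange is licensed, both the existence computation and the uniqueness estimate \eqref{uniqbound} are routine, the latter being in effect a fractional Gronwall argument specialized to the linear system.
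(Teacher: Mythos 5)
Your proposal is correct, but note that the paper itself offers no proof of this theorem: it is stated as a cited result from Daftardar-Gejji and Babakhani, so there is no in-paper argument to compare against. Your route --- recasting the IVP as the Volterra equation, verifying the Mittag--Leffler series term by term, and proving uniqueness via the iterated bound $\norm{z(t)} \leq M \norm{A}^n t^{n\alpha}/\Gamma(n\alpha+1)$ --- is the standard one and is essentially what the cited reference does. One small suggestion: having announced that you will ``work with \eqref{volterra} throughout,'' it is cleaner to actually verify the integral form, i.e.\ check $x_0 + A\,I^{\alpha}x(t) = x(t)$ using $I^{\alpha}t^{\alpha k} = \Gamma(\alpha k+1)\,t^{\alpha(k+1)}/\Gamma(\alpha(k+1)+1)$, since $I^{\alpha}$ interchanges with the uniformly convergent series with no fuss; your verification of the differential form instead requires the extra care you flag, because the termwise classical derivative of the series contains the term $t^{\alpha-1}$, which is singular at $t=0$, so uniform convergence of the differentiated series holds only on $[\epsilon, T]$ and the interchange at the origin needs the integrability of that term. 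This is a presentational wrinkle, not a gap; both the existence computation and the uniqueness iteration are sound.
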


\begin{defn}
The set $u(t;x_0) := \{x(t) ~|~ t \geq 0\}$ is called as \textbf{trajectory} of the IVP in \eqref{ivp1} starting at $x_0 \in \mathbb{R}^n$. 
\end{defn}

For a fixed point on the trajectory, we will use the notation $u(T;x_0) = x(T) = \ml{T^{\alpha}A} x_0$ alternatively.   

\begin{remark} \label{first}
	Note that Caputo fractional derivative for $0 < \alpha < 1$ do not satisfy chain rule in general and hence are not translation invariant \cite{podlubny1999fractional}.
\end{remark} 




Theorem \ref{existence}, along  with continuous dependence on initial data \cite{daftardar2004analysis},  together shows that fractional initial value problem in \eqref{ivp1} is (globally) \textit{well-posed}.

%
%
%
%
\section{Classification of intersections in linear fractional systems} \label{fracdyn}
\begin{defn}
	\textbf For $x_0, y_0 \in \mathbb{R}^n$, $x_0 \neq y_0$, and $t \geq 0$, trajectories $u(t;x_0), u(t; y_0)$ of \eqref{ivp1}, are said to \textit{intersect at point} $p \in \mathbb{R}^n$, if there exist $T, \tilde{T} \geq 0$, such that 
	\begin{equation}
	p = u(T; x_0) = u(\tilde{T}; y_0).
	\end{equation}
\end{defn}

\begin{remark}
	As highlighted in Remark \ref{first}, fractional derivative is not translation invariant and thus $u(T+t;x_0), u(T+t; y_0)$ are not solutions of IVP \eqref{ivp1} with $x(0) = p$. Therefore, unlike $\alpha =1 $ case, existence of intersections does not contradict uniqueness.
\end{remark}


Trajectorial intersections can be classified into three categories as follows.  
\begin{enumerate}
	 \item \textbf{External intersections at same time (EIST):} When two (or more) distinct trajectories intersect at point $p \in \mathbb{R}^n$ after traveling same amount of time $T > 0$ \textit{i.e.} $p=u(T; x_0) = u(T; y_0) \text{ and } ~ x_0 \neq y_0$.
	 \item \textbf{External intersections at different time (EIDT): } When two (or more) distinct trajectories intersect at point $p \in \mathbb{R}^n$ after traveling different amounts of time say $T, \tilde{T} \geq 0, ~ T \neq \tilde{T}$, that is $p=u(T; x_0) = u(\tilde{T}; y_0), ~ x_0 \neq y_0$.
	 \item \textbf{Self intersection:} When single trajectory intersects itself again in finite time, that is for $T, \tilde{T} \geq 0, ~ T \neq \tilde{T}$, $u(T;x_0) = u(\tilde{T};x_0)$.
\end{enumerate}

First we derive some results which are further applied.
\begin{lem} \label{selfc1}
	An eigenvalue of $E_{\alpha, \beta}(T^{\alpha} A), ~ T > 0 $, is of the form $E_{\alpha, \beta}(T^{\alpha} \lambda)$, where $\lambda \in \mathbb{C}$ is an eigenvalue of $A \in \mathbb{R}^{n \times n}$, $0 < \alpha < 1, ~ \beta \in \mathbb{R}$.

\end{lem}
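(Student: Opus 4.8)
The plan is to exploit the fact that $E_{\alpha,\beta}$ is an entire function and that matrix functions defined by convergent power series respect both similarity transformations and triangular structure. First I would record that, since $\alpha>0$, the scalar series $\sum_{k\ge 0} z^k/\Gamma(\alpha k+\beta)$ has infinite radius of convergence, so for the matrix $M=T^{\alpha}A$ the series $E_{\alpha,\beta}(M)=\sum_{k\ge 0} M^k/\Gamma(\alpha k+\beta)$ converges absolutely in any submultiplicative matrix norm. This guarantees that the term-by-term manipulations below are legitimate.

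Next I would bring $A$ to upper triangular form. By the Schur (or Jordan) decomposition there is an invertible $P$ with $A=PBP^{-1}$, where $B$ is upper triangular and its diagonal entries are exactly the eigenvalues $\lambda_1,\dots,\lambda_n$ of $A$ (listed with multiplicity). Since $A^k=PB^kP^{-1}$ for every $k$ and the series converges, the similarity factors through the sum, giving
\[
E_{\alpha,\beta}(T^{\alpha} A)=P\,E_{\alpha,\beta}(T^{\alpha} B)\,P^{-1}.
\]
Because similar matrices share the same eigenvalues, it suffices to compute the eigenvalues of $E_{\alpha,\beta}(T^{\alpha} B)$.

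The key observation is that upper-triangularity is preserved under multiplication and addition, and that the diagonal of a power of an upper triangular matrix is the corresponding power of the diagonal. Hence each partial sum $\sum_{k=0}^{N}(T^{\alpha} B)^k/\Gamma(\alpha k+\beta)$ is upper triangular with $i$-th diagonal entry $\sum_{k=0}^{N}(T^{\alpha}\lambda_i)^k/\Gamma(\alpha k+\beta)$. Passing to the limit $N\to\infty$, where entrywise convergence follows from the norm convergence noted above, shows that $E_{\alpha,\beta}(T^{\alpha} B)$ is again upper triangular with $i$-th diagonal entry $E_{\alpha,\beta}(T^{\alpha}\lambda_i)$. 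Since the eigenvalues of an upper triangular matrix are precisely its diagonal entries, the eigenvalues of $E_{\alpha,\beta}(T^{\alpha} B)$, and therefore of $E_{\alpha,\beta}(T^{\alpha} A)$, are $E_{\alpha,\beta}(T^{\alpha}\lambda_i)$, which is the claim.

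I expect the only point requiring genuine care to be the passage to the limit: one must justify that the limit of upper triangular matrices with prescribed diagonals is again upper triangular with the limiting diagonals, i.e. that being upper triangular and reading off a diagonal entry are continuous with respect to the matrix norm. This is immediate from entrywise convergence, so there is no real obstacle. Alternatively, one could invoke the spectral mapping theorem for the entire function $z\mapsto E_{\alpha,\beta}(T^{\alpha} z)$ and conclude in a single line, at the cost of appealing to a heavier external result rather than keeping the argument self-contained.
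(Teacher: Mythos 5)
Your proof is correct, and it takes a genuinely different route from the paper's. The paper argues via eigenvectors: it takes $v \neq 0$ with $Av = \lambda v$, notes $A^k v = \lambda^k v$, sums the series to get $E_{\alpha,\beta}(T^{\alpha}A)v = E_{\alpha,\beta}(T^{\alpha}\lambda)v$, and then concludes with the remark that ``as there are exactly $n$ complex eigenvalues (with multiplicities), the result follows.'' That argument cleanly shows that each $E_{\alpha,\beta}(T^{\alpha}\lambda)$ \emph{is} an eigenvalue of $E_{\alpha,\beta}(T^{\alpha}A)$, but for a defective $A$ it only produces one eigenvalue per linearly independent eigenvector, so the final counting step --- which is what is needed to conclude that \emph{every} eigenvalue of $E_{\alpha,\beta}(T^{\alpha}A)$ has the stated form --- is left somewhat implicit. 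Your Schur-triangularization argument is the standard spectral-mapping proof: it tracks algebraic multiplicities automatically, since the diagonal of the upper triangular matrix $E_{\alpha,\beta}(T^{\alpha}B)$ lists all $n$ eigenvalues with multiplicity, and it therefore establishes both directions at once without any extra counting. The trade-off is that the paper's eigenvector computation is shorter and is exactly the identity reused later (e.g.\ in Lemma \ref{invertability} and Theorem \ref{selfthm}), whereas your version requires the (routine but nontrivial) facts that similarity and upper-triangularity pass through convergent matrix power series. Both are valid; yours is the more complete justification of the statement as literally phrased.
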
  
\begin{proof}	
	Let $\lambda \in \mathbb{C}$, and $v \in \mathbb{C}^n$ be such that $Av = \lambda v, ~ v \neq 0$. Then $A^{k} v = \lambda^k v$, holds for $k \in \mathbb{N}$. Hence we get
	\begin{equation*}
	E_{\alpha, \beta}(T^{\alpha} A) v = \sum_{k=0}^{\infty} \frac{T^{\alpha k }}{\Gamma(\alpha k +\beta)}\; (A^k v) = \sum_{k=0}^{\infty} \frac{T^{\alpha k }}{\Gamma(\alpha k +1\beta)}\; (\lambda^k v) = E_{\alpha, \beta}(T^{\alpha} \lambda) v.
	\end{equation*} As there are exactly $n$ complex eigenvalues (with multiplicities), the result follows. 
\end{proof}	 

\begin{lem} \label{invertability}
	The matrix $E_{\alpha}(t^{\alpha} A), ~t>0, ~A \in \mathbb{R}^{n \times n}$ is invertible \textit{if and only if} $\arg(\lambda) \neq \arg(\zeta_{\alpha})$, where $\zeta_{\alpha} \in \mathbb{C}$ is a zero of Mittag-Leffler function $E_{\alpha}(z)$ and $\lambda$ an eigenvalue of $A$.  
\end{lem}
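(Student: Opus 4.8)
The plan is to reduce the invertibility of the matrix function to a scalar condition on the spectrum, using the spectral mapping already established in Lemma \ref{selfc1}. A square matrix is invertible precisely when $0$ is not among its eigenvalues. By Lemma \ref{selfc1}, the eigenvalues of $\ml{t^\alpha A}$ are exactly the numbers $\ml{t^\alpha \lambda}$ as $\lambda$ ranges over the eigenvalues of $A$ (counted with multiplicity). Hence $\ml{t^\alpha A}$ fails to be invertible if and only if $\ml{t^\alpha \lambda} = 0$ for some eigenvalue $\lambda$ of $A$, that is, if and only if $t^\alpha \lambda$ is a zero of the scalar Mittag-Leffler function.

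Next I would translate the equation $t^\alpha \lambda = \zeta_\alpha$ into a statement about arguments. Since $t > 0$, the factor $t^\alpha$ is a positive real scalar, so for $\lambda \neq 0$ the number $t^\alpha \lambda$ has the same argument as $\lambda$, while its modulus $t^\alpha \mnorm{\lambda}$ sweeps all of $(0,\infty)$ as $t$ varies. Every zero $\zeta_\alpha$ is nonzero because $\ml{0} = 1/\Gamma(1) = 1$; in particular $\arg(\zeta_\alpha)$ is well defined, and the eigenvalue $\lambda = 0$ can never produce singularity (it gives $\ml{0} = 1 \neq 0$), so that case is set aside. Thus the equation $t^\alpha \lambda = \zeta_\alpha$ is solvable in $t > 0$ exactly when $\arg(\lambda) = \arg(\zeta_\alpha)$, in which case $t = (\mnorm{\zeta_\alpha}/\mnorm{\lambda})^{1/\alpha}$ matches the moduli.

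Combining these two reductions yields the equivalence. For the direction that the argument condition forces invertibility: if $\arg(\lambda) \neq \arg(\zeta_\alpha)$ for every eigenvalue $\lambda$ and every zero $\zeta_\alpha$, then $t^\alpha \lambda$ never lands on a zero of the scalar function, so $\ml{t^\alpha \lambda} \neq 0$ for all $\lambda$ and all $t > 0$, whence $\ml{t^\alpha A}$ is invertible. For the converse I would argue by contrapositive: if $\arg(\lambda) = \arg(\zeta_\alpha)$ for some pair, then choosing $t$ as above makes $0 = \ml{t^\alpha \lambda}$ an eigenvalue, so $\ml{t^\alpha A}$ is singular for that $t$.

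The individual steps are routine; the points needing care are (i) the precise role of the parameter $t$ — the statement is really that invertibility holds for every $t > 0$ iff the arguments never coincide, with singularity occurring only at the isolated value $t = (\mnorm{\zeta_\alpha}/\mnorm{\lambda})^{1/\alpha}$ where both modulus and argument agree — and (ii) the degenerate handling of $\lambda = 0$ together with the fact that all zeros of $\ml{\cdot}$ are nonzero, so that arguments are meaningful. No information on the actual location of the Mittag-Leffler zeros is required; only $\ml{0} = 1$ and the spectral mapping of Lemma \ref{selfc1}.
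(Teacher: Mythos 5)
Your proof follows the same route as the paper's: reduce invertibility to the non-vanishing of the eigenvalues $E_{\alpha}(t^{\alpha}\lambda)$ via Lemma \ref{selfc1}, and then translate the equation $t^{\alpha}\lambda=\zeta_{\alpha}$ into a condition on arguments using that $t^{\alpha}$ is a positive real scalar. You are in fact somewhat more careful than the paper, which passes from ``$t^{\alpha}\lambda\neq\zeta_{\alpha}$'' to ``$\arg(\lambda)\neq\arg(\zeta_{\alpha})$'' without spelling out the quantification over $t>0$ or the degenerate cases $\lambda=0$ and $E_{\alpha}(0)=1$, all of which you address explicitly.
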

\begin{proof}
	Note $E_{\alpha}(t^{\alpha}A)$ is an invertible matrix  $\Longleftrightarrow\det(E_{\alpha}(t^{\alpha}A)) \neq 0 \Longleftrightarrow$  none of the eigenvalues of $E_{\alpha}(t^{\alpha}A)$ are zero. Using Lemma \ref{selfc1}, this is equivalent to $E_{\alpha}(t^{\alpha} \lambda) \neq 0$,  where $\lambda \in \mathbb{C}$ is an eigenvalue of $A$.
	But this is possible if and only if $t^{\alpha} \lambda \neq \zeta_{\alpha}$, for $\zeta_{\alpha} \in \mathbb{C}$ a zero of Mittag-Leffler function $E_{\alpha}(z)$. Thus we get $\arg(\lambda) \neq \arg(\zeta_{\alpha})$ as required. 
\end{proof}

\begin{lem}\label{commutativity}
	For $T, \tilde{T} > 0$, 
	\begin{enumerate}
		\item $E_{\alpha}(T^{\alpha}A) \; E_{\alpha}(\tilde{T}^{\alpha}A) = E_{\alpha}(\tilde{T}^{\alpha}A) \; E_{\alpha}(T^{\alpha}A),$
		\item If $E_{\alpha}(\tilde{T}^{\alpha}A)$ is invertible matrix, then 
		\begin{equation*}
		E_{\alpha}(T^{\alpha}A) \; E_{\alpha}(\tilde{T}^{\alpha}A)^{-1} = E_{\alpha}(\tilde{T}^{\alpha}A)^{-1} \; E_{\alpha}(T^{\alpha}A).
		\end{equation*}
	\end{enumerate}
\end{lem}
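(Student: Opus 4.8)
The plan is to exploit the fact that both matrices are power series in the single matrix $A$, and that any two power series in a common matrix commute. First I would expand both operators using their defining series, writing $\ml{T^{\alpha}A} = \sum_{j \geq 0} \frac{T^{\alpha j}}{\Gamma(\alpha j + 1)} A^{j}$ and $\ml{\tilde{T}^{\alpha}A} = \sum_{k \geq 0} \frac{\tilde{T}^{\alpha k}}{\Gamma(\alpha k + 1)} A^{k}$. Forming the product in each order and using that powers of $A$ commute, $A^{j} A^{k} = A^{j+k} = A^{k} A^{j}$, both orderings collapse to the identical double series
\begin{equation*}
\sum_{j,k \geq 0} \frac{T^{\alpha j}\, \tilde{T}^{\alpha k}}{\Gamma(\alpha j+1)\, \Gamma(\alpha k+1)}\, A^{j+k},
\end{equation*}
which establishes part (1).

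The single technical point requiring care, and the main obstacle, is justifying the term-by-term multiplication and rearrangement of two \emph{infinite} matrix series. I would dispatch this by fixing any submultiplicative matrix norm $\norm{\cdot}$ and observing that the scalar Mittag-Leffler function is entire, so that $\sum_{j \geq 0} \frac{T^{\alpha j}}{\Gamma(\alpha j+1)} \norm{A}^{j} = \ml{T^{\alpha}\norm{A}} < \infty$, and likewise for $\tilde{T}$. Since both factors converge absolutely, the Cauchy product of the two series is well defined and the reordering of summation leading to the displayed double series is legitimate; this is what makes the formal manipulation above rigorous.

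For part (2), I would appeal to the standard linear-algebra fact that commutation is preserved under inversion. Setting $B := \ml{\tilde{T}^{\alpha}A}$ and $C := \ml{T^{\alpha}A}$, part (1) gives $BC = CB$, and $B$ is invertible by hypothesis. Multiplying this identity on both the left and the right by $B^{-1}$ yields $C B^{-1} = B^{-1} C$, which is precisely the asserted relation $\ml{T^{\alpha}A}\, \ml{\tilde{T}^{\alpha}A}^{-1} = \ml{\tilde{T}^{\alpha}A}^{-1}\, \ml{T^{\alpha}A}$. Thus part (2) follows immediately from part (1) with no further convergence considerations.
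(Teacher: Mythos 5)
Your proof is correct and follows the same route the paper intends: the paper's proof is simply ``Follows from the definitions,'' and your expansion of both operators as power series in $A$, the commutation of powers of $A$, and the standard inversion argument for part (2) are exactly the details being elided. Your care about absolute convergence (via the entire scalar Mittag-Leffler function and a submultiplicative norm) is a welcome addition, not a deviation.
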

\begin{proof}
Follows from the definitions.
\end{proof}

\section{Intersections with zero trajectory} \label{zerotrajectory}
We deal with the case of intersections with zero trajectory in this section. Theorem  \ref{zero1} and Theorem  \ref{zero2} give necessary and sufficient conditions for a non-zero trajectory to intersect origin in finite time. 

\begin{thm} \label{zero1}
	If a non-zero trajectory $x(t) = u(t;x_0), ~ x_0 \neq 0, ~ t >0$ of IVP \eqref{ivp1} intersects origin at time $T > 0$, then $x_0 \in \ker\left\lbrace \ml{T^{\alpha} A} \right\rbrace $ and at least one of the eigenvalues of $A$, say $\lambda_k$, satisfies $\arg(\lambda_k) = \arg(\zeta_{\alpha, k})$, where $\zeta_{\alpha, k}, ~k \in \mathbb{N}$ denotes a zero of $\ml{z}$. 
\end{thm}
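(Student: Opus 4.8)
The plan is to reduce the geometric intersection hypothesis to a purely algebraic statement about the Mittag-Leffler operator and then read off both conclusions from Lemmas \ref{selfc1} and \ref{invertability}. First I would invoke the solution formula from Theorem \ref{existence}, writing the trajectory explicitly as $x(t) = \ml{t^{\alpha} A}\, x_0$. The hypothesis that the trajectory meets the origin at time $T > 0$ means $x(T) = 0$, which translates directly into
\begin{equation*}
\ml{T^{\alpha} A}\, x_0 = 0.
\end{equation*}
Since $x_0 \neq 0$ by assumption, this exhibits $x_0$ as a nonzero element of $\ker\{\ml{T^{\alpha} A}\}$, giving the first assertion at once.

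The second half follows from the fact that a nonzero kernel vector forces the square matrix $\ml{T^{\alpha} A}$ to be singular. For this I would apply the contrapositive of Lemma \ref{invertability}: non-invertibility of $\ml{T^{\alpha} A}$ is equivalent to the failure of the separation condition $\arg(\lambda) \neq \arg(\zeta_{\alpha})$, hence there exist an eigenvalue $\lambda_k$ of $A$ and a zero $\zeta_{\alpha,k}$ of $E_{\alpha}(z)$ with $\arg(\lambda_k) = \arg(\zeta_{\alpha,k})$, which is exactly the claim.

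Alternatively, and perhaps more transparently, I would argue directly through the spectrum. By Lemma \ref{selfc1} the eigenvalues of $\ml{T^{\alpha} A}$ are precisely the numbers $\ml{T^{\alpha} \lambda_i}$ as $\lambda_i$ ranges over the eigenvalues of $A$ (counted with multiplicity), so $\det \ml{T^{\alpha} A} = \prod_i \ml{T^{\alpha} \lambda_i}$. Singularity means this product vanishes, so $\ml{T^{\alpha} \lambda_k} = 0$ for some index $k$; that is, $T^{\alpha} \lambda_k$ equals some zero $\zeta_{\alpha,k}$ of the scalar Mittag-Leffler function. Taking arguments and using $T^{\alpha} > 0$, so that $\arg(T^{\alpha}\lambda_k) = \arg(\lambda_k)$, yields $\arg(\lambda_k) = \arg(\zeta_{\alpha,k})$.

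Given the preparatory lemmas, there is no serious obstacle here; the theorem is essentially a corollary. The only points demanding care are routine: that a square matrix with a nontrivial kernel is genuinely singular, and that when passing from $T^{\alpha}\lambda_k = \zeta_{\alpha,k}$ to the argument identity one should invoke the positivity of $T^{\alpha}$ rather than any magnitude comparison. I would also take care to use the correct (contrapositive) direction of the equivalence in Lemma \ref{invertability}, since it is the implication ``singular $\Rightarrow$ coincident arguments'' that is needed, the converse magnitude-matching being irrelevant for this direction.
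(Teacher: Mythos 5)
Your proposal is correct and follows essentially the same route as the paper's proof: read off $x_0 \in \ker\{\ml{T^{\alpha}A}\}$ from the solution formula, conclude non-invertibility from $x_0 \neq 0$, and apply Lemma \ref{invertability} to obtain the argument condition on an eigenvalue. Your alternative spectral argument merely unpacks the proof of Lemma \ref{invertability} via Lemma \ref{selfc1}, so it does not constitute a genuinely different approach.
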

\begin{proof}
	Suppose $x(T) = 0$ for some $T > 0$. Then $\ml{T^{\alpha}A} x_0 = 0$ and so $x_0 \in \ker\{\ml{T^{\alpha}A}\}$. Further, as $x_0 \neq 0$, $\ml{T^{\alpha} A}$ is not invertible. Thus, by Lemma \ref{invertability}, there exist a zero $\zeta_{\alpha, k} \in \mathbb{C}$ of $\ml{z}$ such that $\frac{\zeta_{\alpha, k}}{T^{\alpha}}$ is one of the eigenvalue of $A$. Hence the result follows. 
\end{proof}

\begin{thm} \label{zero2}
If for some zero $\zeta_{\alpha, k}, ~ k \in \mathbb{N}$ of $\ml{z}$ and an eigenvalue $\lambda_k$ of $A$,  $\arg(\lambda_k) = \arg(\zeta_{\alpha, k})$ holds , then there exists a $T>0$ such that $\ker\{\ml{T^{\alpha}A}\} \neq \{0\}$ and for any $x_0 \in \ker\{\ml{T^{\alpha}A}\}$, $u(T;x_0) = 0$.
\end{thm}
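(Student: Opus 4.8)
The plan is to recognize this statement as the converse of Theorem \ref{zero1} and to build, directly from the argument-matching hypothesis, the explicit time $T$ at which the Mittag-Leffler operator $\ml{T^{\alpha}A}$ becomes singular. Once such a $T$ is produced, the remaining conclusions will follow immediately from Lemma \ref{selfc1} and the solution formula of Theorem \ref{existence}.

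First I would record that every zero of $\ml{z}$ is nonzero, since $\ml{0} = 1/\Gamma(1) = 1 \neq 0$; hence $\zeta_{\alpha,k} \neq 0$, and for the equality $\arg(\lambda_k) = \arg(\zeta_{\alpha,k})$ to be meaningful we also have $\lambda_k \neq 0$. Because the two nonzero complex numbers $\lambda_k$ and $\zeta_{\alpha,k}$ share the same argument, their ratio $\zeta_{\alpha,k}/\lambda_k = |\zeta_{\alpha,k}|/|\lambda_k|$ is a positive real number. I would therefore set
\[
T = \left( \frac{|\zeta_{\alpha,k}|}{|\lambda_k|} \right)^{1/\alpha} > 0,
\]
so that $T^{\alpha}\lambda_k$ has argument $\arg(\lambda_k) = \arg(\zeta_{\alpha,k})$ and modulus $|\zeta_{\alpha,k}|$, giving the exact identity $T^{\alpha}\lambda_k = \zeta_{\alpha,k}$.

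Next, by Lemma \ref{selfc1}, the quantity $\ml{T^{\alpha}\lambda_k}$ is an eigenvalue of $\ml{T^{\alpha}A}$; substituting the identity just obtained yields $\ml{T^{\alpha}\lambda_k} = \ml{\zeta_{\alpha,k}} = 0$. Thus $0$ is an eigenvalue of $\ml{T^{\alpha}A}$, so $\det(\ml{T^{\alpha}A}) = 0$ and the operator is singular; equivalently $\ker\{\ml{T^{\alpha}A}\} \neq \{0\}$. Finally, for any $x_0 \in \ker\{\ml{T^{\alpha}A}\}$, the solution formula of Theorem \ref{existence} gives $u(T;x_0) = \ml{T^{\alpha}A}\, x_0 = 0$, which is exactly the desired conclusion.

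The only genuinely delicate step is the construction of $T$: everything else is a routine application of Lemma \ref{selfc1} and the explicit solution. The point requiring care is the passage from the argument equality to the exact scalar identity $T^{\alpha}\lambda_k = \zeta_{\alpha,k}$, where one must use that $\lambda_k$ and $\zeta_{\alpha,k}$ are nonzero and collinear so that a single positive scaling $T^{\alpha}$ can match both modulus and argument simultaneously. I would also note that this $T$ is precisely the threshold that is excluded in Lemma \ref{invertability}, which makes the present theorem the natural converse of Theorem \ref{zero1}.
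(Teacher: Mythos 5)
Your proposal is correct and follows essentially the same route as the paper: choose $T>0$ so that $T^{\alpha}\lambda_k=\zeta_{\alpha,k}$, conclude that $\ml{T^{\alpha}A}$ is singular (the paper cites Lemma \ref{invertability}, you unwind it via Lemma \ref{selfc1}, which amounts to the same thing), and then apply the solution formula to any kernel vector. Your explicit formula for $T$ and the observation that $\zeta_{\alpha,k}\neq 0$ are welcome details the paper leaves implicit.
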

\begin{proof}
Since $\arg(\lambda_k) = \arg(\zeta_{\alpha, k})$, there exists $T>0$ such that  $ \lambda_k = \frac{\zeta_{\alpha,k}}{T^{\alpha}}$ and by Lemma \ref{invertability}, $\det(\ml{T^{\alpha}A}) = 0$. That is $\ker\{\ml{T^{\alpha}A}\} \neq \{0\}$. Let $x_0 \in \ker\{\ml{T^{\alpha}A}\}, ~ x_0 \neq 0$. Then $x(T) = u(T;x_0) = \ml{T^{\alpha}A}\;x_0 = 0$.
\end{proof}
\begin{cor}
	Each eigenvalue $\lambda$ of $A$ satisfies $\arg(\lambda) \neq \arg(\zeta_{\alpha})$, where $\zeta_{\alpha} \in \mathbb{C}$ is a zero of $\ml{z}$ if and only if trajectory $x(t) = u(t;x_0),~x_0 \neq0$ of IVP \eqref{ivp1} satisfies $x(t) \neq 0, ~ 0< t < \infty$. 
\end{cor}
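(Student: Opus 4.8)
The plan is to recognize that this corollary is exactly the logical equivalence obtained by pairing Theorem~\ref{zero1} with Theorem~\ref{zero2}, so I would establish each implication separately, in both cases by contraposition. For bookkeeping, let (A) denote the eigenvalue condition, namely that $\arg(\lambda) \neq \arg(\zeta_{\alpha})$ for every eigenvalue $\lambda$ of $A$ and every zero $\zeta_{\alpha}$ of $\ml{z}$, and let (B) denote the statement that every nonzero trajectory $u(t;x_0)$, $x_0 \neq 0$, satisfies $u(t;x_0) \neq 0$ for all $0 < t < \infty$. The target is the equivalence (A) $\Longleftrightarrow$ (B).

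For the direction (A) $\Rightarrow$ (B), I would argue contrapositively. Assume (B) fails, so there exist $x_0 \neq 0$ and $T > 0$ with $u(T;x_0) = 0$. Theorem~\ref{zero1} then applies verbatim and forces at least one eigenvalue $\lambda_k$ of $A$ to satisfy $\arg(\lambda_k) = \arg(\zeta_{\alpha, k})$ for some zero $\zeta_{\alpha, k}$ of $\ml{z}$. This is precisely the negation of (A), so (A) implies (B).

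For the converse (B) $\Rightarrow$ (A), again by contraposition, suppose (A) fails, so some eigenvalue $\lambda_k$ has $\arg(\lambda_k) = \arg(\zeta_{\alpha, k})$ for a zero $\zeta_{\alpha, k}$ of $\ml{z}$. Theorem~\ref{zero2} then produces a time $T > 0$ for which $\ker\{\ml{T^{\alpha}A}\} \neq \{0\}$ and $u(T;x_0) = 0$ for every $x_0$ in this kernel. Choosing any nonzero $x_0 \in \ker\{\ml{T^{\alpha}A}\}$ exhibits a nonzero trajectory that meets the origin at time $T$, contradicting (B). Hence (B) implies (A), and the equivalence follows.

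Since both directions reduce to a direct invocation of the two preceding theorems, I do not anticipate any substantive obstacle. The only point requiring care is the matching of quantifiers: condition (A) is a \emph{universal} assertion over all eigenvalue/zero pairs, so its negation is the \emph{existence} of a single matching pair $(\lambda_k, \zeta_{\alpha, k})$, and this is exactly the hypothesis under which Theorems~\ref{zero1} and \ref{zero2} are stated. Making this correspondence explicit is what guarantees that the two contrapositive arguments line up cleanly into the stated biconditional.
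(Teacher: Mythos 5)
Your proof is correct and follows exactly the route the paper intends: the corollary is stated immediately after Theorems \ref{zero1} and \ref{zero2} precisely because it is their conjunction read as a biconditional via contraposition, which is what you carry out. Your explicit attention to the quantifier matching (universal condition versus existential negation) is a welcome clarification of a point the paper leaves implicit.
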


In view of these results we can completely characterize all possible intersections with zero trajectory in linear fractional systems.  

\section{External intersections}\label{externalintersections}

The following theorem gives a necessary condition for linear fractional system to have \textit{EIST}.

\begin{thm} \label{sametime1}
	For $x_0,\; y_0 \in \mathbb{R}^n, ~ x_0 \neq y_0$ and $T > 0$, if trajectories of IVP \eqref{ivp1} satisfy $u(T;x_0)=u(T;y_0)$, then  at least one of the eigenvalues of $A$, say $\lambda_k$, satisfies $\arg(\lambda_k) = \arg(\zeta_{\alpha, k})$, where $\zeta_{\alpha, k}, ~ k \in \mathbb{N}$ is a zero of $\ml{z}$.
\end{thm}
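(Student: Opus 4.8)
The plan is to reduce this statement to the non-invertibility of the Mittag-Leffler operator, exactly as was done for the zero-trajectory case in Theorem \ref{zero1}, and then invoke Lemma \ref{invertability}. The crucial observation is that the solution map $x_0 \mapsto u(T;x_0) = \ml{T^{\alpha}A}\,x_0$ is \emph{linear}, so coincidence of two trajectories at the same time $T$ translates directly into a kernel condition on a single matrix.

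First I would write out both trajectories explicitly using Theorem \ref{existence}, namely $u(T;x_0) = \ml{T^{\alpha}A}\,x_0$ and $u(T;y_0) = \ml{T^{\alpha}A}\,y_0$. The hypothesis $u(T;x_0) = u(T;y_0)$ then gives
\begin{equation*}
\ml{T^{\alpha}A}\,(x_0 - y_0) = 0.
\end{equation*}
Since $x_0 \neq y_0$, the vector $x_0 - y_0$ is a nonzero element of $\ker\{\ml{T^{\alpha}A}\}$, so this kernel is nontrivial and hence $\ml{T^{\alpha}A}$ is \emph{not} invertible. Note that it is essential here that we intersect at the \emph{same} time $T$, so that the same operator acts on both initial data and the difference collapses into a single kernel membership; this is precisely what fails for EIDT, where two different operators $\ml{T^{\alpha}A}$ and $\ml{\tilde{T}^{\alpha}A}$ are involved.

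The final step is to apply the contrapositive of Lemma \ref{invertability}: since $\ml{T^{\alpha}A}$ is singular, it cannot be the case that $\arg(\lambda) \neq \arg(\zeta_{\alpha})$ for every eigenvalue $\lambda$ of $A$ and every zero $\zeta_{\alpha}$ of $\ml{z}$. Therefore there exists at least one eigenvalue $\lambda_k$ of $A$ and a zero $\zeta_{\alpha,k}$ of $\ml{z}$ with $\arg(\lambda_k) = \arg(\zeta_{\alpha,k})$, as claimed. I do not anticipate any genuine obstacle here: the result is essentially immediate once the linearity argument is combined with Lemma \ref{invertability}, and the only point requiring a little care is emphasizing that the shared time $T$ is what makes the kernel reduction possible.
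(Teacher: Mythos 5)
Your proposal is correct and follows essentially the same route as the paper: both exploit linearity to reduce the hypothesis to $\ml{T^{\alpha}A}(x_0-y_0)=0$ with $x_0-y_0\neq 0$, conclude that $\ml{T^{\alpha}A}$ is singular, and invoke Lemma \ref{invertability}. The only cosmetic difference is that the paper packages the difference as a non-zero trajectory $z(t)=u(t;x_0-y_0)$ before evaluating at $t=T$.
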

\begin{proof}
	Let $z_0 = x_0-y_0 \neq 0$ and consider $z(t) = u(t;z_0)$. Due to linearity, $z(t)$ is non-zero trajectory of IVP of \eqref{ivp1}. Further 
	\begin{equation*}
		z(T)= u(T;z_0) = \ml{T^{\alpha}A}z_0 = \ml{T^{\alpha}A}(x_0-y_0) = 0.
	\end{equation*} This implies $\det(\ml{T^{\alpha}A})=0$ and  by Lemma \ref{invertability}, $\frac{\zeta_{\alpha, k}}{T^{\alpha}}$ is one of the eigenvalues of $A$.
\end{proof}

\begin{cor}[Generalized separation theorem for linear systems] \label{sametimecor}
	For each eigenvalue $\lambda$ of $A$ and zero $\zeta_{\alpha}$ of $\ml{z}$, if $\arg(\lambda) \neq \arg(\zeta_{\alpha})$ \
	then precisely one trajectory of IVP of \eqref{ivp1} crosses $p \in \mathbb{R}^n$ at time $t=T >0 $. 
\end{cor}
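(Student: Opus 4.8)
The plan is to reduce the statement to the invertibility of the Mittag-Leffler operator $\ml{T^\alpha A}$ furnished by Lemma \ref{invertability}. First I would unpack what ``precisely one trajectory crosses $p$ at time $T$'' means. By Theorem \ref{existence}, each initial datum $x_0 \in \mathbb{R}^n$ determines a unique trajectory $u(t;x_0) = \ml{t^\alpha A}\, x_0$, and distinct trajectories correspond to distinct initial data. A trajectory passes through $p$ at time $T$ exactly when its initial condition satisfies $\ml{T^\alpha A}\, x_0 = p$. Hence the corollary is equivalent to the assertion that this linear equation has exactly one solution $x_0 \in \mathbb{R}^n$.

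Next I would invoke Lemma \ref{invertability} with $t = T$. The hypothesis of the corollary---that $\arg(\lambda) \neq \arg(\zeta_\alpha)$ for every eigenvalue $\lambda$ of $A$ and every zero $\zeta_\alpha$ of $\ml{z}$---is precisely the invertibility criterion in that lemma. Consequently $\ml{T^\alpha A}$ is an invertible matrix, so the equation $\ml{T^\alpha A}\, x_0 = p$ has the unique solution $x_0 = \left[\ml{T^\alpha A}\right]^{-1} p$. Existence of this solution yields at least one trajectory through $p$ at time $T$, and its uniqueness yields at most one; together they give exactly one, as claimed.

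There is essentially no computational obstacle here, since the analytic content has already been absorbed into Lemma \ref{invertability}. The only point requiring care is interpretive: recognizing that the corollary packages both an existence and a uniqueness claim, and that both are consequences of the single fact that $\ml{T^\alpha A}$ is a bijection of $\mathbb{R}^n$ under the stated hypothesis. I would also note that the uniqueness half can alternatively be read off by contraposition from Theorem \ref{sametime1}: were two distinct trajectories to meet at $p$ at the common time $T$, some eigenvalue would satisfy $\arg(\lambda_k) = \arg(\zeta_{\alpha,k})$, contradicting the hypothesis---which is exactly why the result is phrased as a corollary of that theorem.
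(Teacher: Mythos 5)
Your proposal is correct and follows essentially the same route as the paper: existence comes from the surjectivity of $\ml{T^{\alpha}A}$ via Lemma \ref{invertability}, and uniqueness from its injectivity, with the paper electing to cite Theorem \ref{sametime1} for the uniqueness half exactly as in the alternative you mention at the end. No gaps.
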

\begin{proof}
	As $\arg(\lambda) \neq \arg(\zeta_{\alpha})$, by Lemma \ref{invertability}, $\ml{T^{\alpha}A}, ~ T>0$ is invertible and hence $\mathrm{Img}\{\ml{T^{\alpha}A}\}=\mathbb{R}^n$. Thus, for $p \in \mathbb{R}^n$, there exists a $x_T \in \mathbb{R}^n$ such that $\ml{T^{\alpha}A}x_T = p$. The uniqueness of this $x_T$ follows from Theorem \ref{sametime1}.
\end{proof}

Using Theorem \ref{sametime1}, we can classify the systems into following two categories.
\begin{enumerate}[ \textbf{Type} I:]
	\item Each eigenvalue $\lambda$ of $A$ satisfies $\arg(\lambda) \neq \arg(\zeta_{\alpha})$, $\zeta_{\alpha} \in \mathbb{C}$, being zero of $\ml{z}$.
\item at least one eigenvalue of $A$ say  $\lambda_k$ satisfies $\arg(\lambda_k) = \arg(\zeta_{\alpha, k})$ for some zero $\zeta_{\alpha, k}, ~ k \in \mathbb{N}$ of $\ml{z}$.
\end{enumerate}

As a consequence of Theorem \ref{sametime1}, \textbf{Type I} systems are free from EIST. Note that all one-dimensional linear systems, as shown by Diethelm \cite{diethelm2010analysis} and triangular linear systems, as shown by Cong \textit{et al.}\cite{cong2017generation} are strict subsets of \textbf{Type I} systems.      

We analyze \textbf{Type I} systems further for EIDT.  Let $x_0 \in \mathbb{R}^n$ be fixed and $x(t)=u(t;x_0)$ denotes its trajectory. The aim is to find all possible trajectories which will intersect $u(t;x_0)$ at distinct times. By Theorem \ref{sametime1} and corollary \ref{sametimecor} , for each $t\geq 0$, we can find a unique $x_t \in \mathbb{R}^n$, such that $u(t;x_t)=x_0$. Let $\gamma_{x_0}: [0, \infty) \rightarrow \mathbb{R}^n$ be defined as $\gamma_{x_0}(t) = x_t$. Then $\gamma_{x_0}$ is connected continuous curve in $\mathbb{R}^n$ with $\gamma_{x_0}(0) = x_0$. Thus $\gamma_{x_0}$ represents collection of all points in $\mathbb{R}^n$ whose trajectories intersect $u(t;x_0)$ at point $x_0$ in distinct times. Note that for $\alpha=1$ case, $\gamma_{x_0}$ is the reverse time evolution of trajectory $u(t; x_0)$. Thus we call this curve as inverse curve.
\begin{defn}
	For point $x_0 \in \mathbb{R}^n$ and $x(t) = u(t;x_0), ~ t \geq 0$ being solution of IVP \eqref{ivp1} of \textbf{Type I} system, the inverse curve of $x_0$ i.e. $\gamma_{x_0}$  is defined as   
\begin{equation} 
\gamma_{x_0}(t) = \ml{t^{\alpha} A}^{-1} x_0, ~~ t \geq 0.
\end{equation}
\end{defn}
 
For $T > 0$, if $p = u(T;x_0)$, then for $t \geq 0$, 
  \begin{align*}
  \gamma_p(t)&=\ml{t^{\alpha} A}^{-1} p =\ml{t^{\alpha}A}^{-1}\;[\ml{T^{\alpha} A} x_0], \\
  &= \ml{T^{\alpha} A}\;[ \ml{t^{\alpha}A}^{-1}x_0] =\ml{T^{\alpha} A} \;\gamma_{x_0}(t).
  \end{align*}
  
Thus, we get
\begin{equation} \label{gammaevolve}
\gamma_p(t) = u(T; \gamma_{x_0}(t)) 
\end{equation} 

Define the set 
\begin{equation} \label{S}
S= \bigcup_{T \geq 0} \;\bigcup_{t \geq 0}\; \gamma_{x(T)}(t)
\end{equation}
S represents collection of all points whose trajectories will intersect $u(t;x_0)$ in distinct times. Thus we have proved following result about EIDT.
\begin{thm}
	Let $x_0 \in \mathbb{R}^n$ and $x(T) = u(T;x_0), ~ T\geq 0$ be a solution of IVP \eqref{ivp1} of \textbf{Type I} system. For any $x \in S$, we can find unique pair $T, t \geq 0$, such that $T \neq t$ and 
	\begin{equation}
	u(t;x) = u(T;x_0).
	\end{equation} Further, the points in $S$ are precisely the points having this property.
\end{thm}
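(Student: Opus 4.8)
The statement packages two claims: a characterization of $S$ (its points are exactly those whose trajectory meets $u(\cdot;x_0)$ at some time, at a point lying on that trajectory) and the uniqueness of the realizing time-pair. Because we are in a \textbf{Type I} system, Lemma \ref{invertability} guarantees that $\ml{t^{\alpha}A}$ is invertible for every $t>0$ (and trivially at $t=0$), so every inverse curve $\gamma_{x(T)}$ is well defined and the manipulations below are legitimate. The plan is to first dispatch existence together with the characterization, which amount to unwinding the definitions of $\gamma$ and $S$ modulo invertibility, and then to treat uniqueness, which is where the real content lies.

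For existence and the characterization: if $x\in S$, then by \eqref{S} there are $T,t\geq 0$ with $x=\gamma_{x(T)}(t)=\ml{t^{\alpha}A}^{-1}\ml{T^{\alpha}A}\,x_0$. Applying $\ml{t^{\alpha}A}$ to both sides yields
\begin{equation*}
u(t;x)=\ml{t^{\alpha}A}\,x=\ml{T^{\alpha}A}\,x_0=u(T;x_0),
\end{equation*}
so $x$ has the intersection property. Conversely, if $u(t;x)=u(T;x_0)$ for some $T,t\geq 0$, then invertibility lets me solve $x=\ml{t^{\alpha}A}^{-1}\ml{T^{\alpha}A}\,x_0=\gamma_{x(T)}(t)$, whence $x\in S$. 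This shows $S$ is precisely the set of points with the stated property. For the constraint $T\neq t$: if $T=t$ then $x=\ml{T^{\alpha}A}^{-1}\ml{T^{\alpha}A}\,x_0=x_0$, so as soon as $x\neq x_0$ any realizing pair automatically satisfies $T\neq t$; the point $x_0$ itself must be set aside (or treated as a self-intersection in the sense of Section \ref{selfintersections}).

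Uniqueness is the step I expect to be the obstacle. Suppose $(T_1,t_1)$ and $(T_2,t_2)$ both realize $x$, i.e.\ $\ml{t_1^{\alpha}A}^{-1}\ml{T_1^{\alpha}A}\,x_0=\ml{t_2^{\alpha}A}^{-1}\ml{T_2^{\alpha}A}\,x_0$. Using the commutativity of Lemma \ref{commutativity} I can clear the inverses and rewrite this as
\begin{equation*}
\ml{T_1^{\alpha}A}\,\ml{t_2^{\alpha}A}\,x_0=\ml{T_2^{\alpha}A}\,\ml{t_1^{\alpha}A}\,x_0.
\end{equation*}
The aim is to force $T_1=T_2$ and $t_1=t_2$, and the only lever is the failure of the semigroup/translation-invariance property of $t\mapsto\ml{t^{\alpha}A}$ recorded in Remark \ref{first}: unlike the classical case $\alpha=1$, where $\ml{t^{\alpha}A}^{-1}\ml{T^{\alpha}A}$ collapses to $e^{(T-t)A}$ and depends on $T-t$ alone, for $0<\alpha<1$ this product genuinely depends on both arguments separately. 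The natural route is to diagonalize (or triangularize) $A$ and reduce to the scalar functions $t\mapsto\ml{t^{\alpha}\lambda}$ along each eigenvalue, where strict monotonicity/injectivity in $t$ can be invoked. I expect this reduction to be delicate precisely because a single scalar relation of the form $\ml{T^{\alpha}\lambda}/\ml{t^{\alpha}\lambda}=\text{const}$ by itself cuts out a one-parameter curve of admissible $(T,t)$; pinning the pair down therefore requires using the full eigenstructure jointly (several distinct eigenvalues, or the vectorial character of $x_0$) rather than one scalar equation, and verifying that these constraints are simultaneously rigid is the crux of the argument.
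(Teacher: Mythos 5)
Your existence and characterization argument coincides with what the paper actually does: the paper offers no separate proof of this theorem, but simply declares it ``proved'' by the preceding construction of the inverse curves $\gamma_{x(T)}$ and the set $S$ in \eqref{S}, which is exactly the definition-unwinding you carry out (apply $\ml{t^{\alpha}A}$ to $x=\gamma_{x(T)}(t)$ to obtain $u(t;x)=u(T;x_0)$, and use invertibility, valid for \textbf{Type I} systems by Lemma \ref{invertability}, for the converse inclusion). Your observation that the pair with $T=t$ forces $x=x_0$ and must be set aside is also correct and is glossed over in the paper.

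The gap is the uniqueness of the pair $(T,t)$: you explicitly identify it as the crux and then do not prove it, so as a standalone proof of the theorem as stated your proposal is incomplete. It should be said plainly that the paper supplies no argument here either. The only uniqueness established in the text is that of the point $x_t$ satisfying $u(t;x_t)=x_0$ for a \emph{fixed} $t$ (Corollary \ref{sametimecor}), which is a different statement from injectivity of the map $(T,t)\mapsto \ml{t^{\alpha}A}^{-1}\ml{T^{\alpha}A}\,x_0$. Your suspicion that this injectivity requires a genuine argument is well founded: taken literally the claim can fail for degenerate \textbf{Type I} systems (for $A=0$ one has $\ml{t^{\alpha}A}=I$ for all $t$, so $S=\{x_0\}$ and every pair with $T\neq t$ realizes the intersection), so some nondegeneracy hypothesis, or a weaker reading of ``unique,'' is needed before the uniqueness half can be proved at all. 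You have not missed an idea that is present in the paper; you have correctly located a step the paper does not justify.
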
 

\begin{exmp}
	 Consider the IVP given in \eqref{ivp1} with $A = \begin{bmatrix}0 & 1\\-1 & 0 \end{bmatrix}$ and $x_0 = (2,1)^t \in \mathbb{R}^2$. The trajectory $u(t;x_0)$ in this case is given as 
	 $u(t;x_0) = \ml{t^{\alpha} A} x_0, ~ t \geq 0$, where 
	 \begin{equation*}
	 \ml{t^{\alpha}A} = \begin{bmatrix}
	 \Re(\ml{t^{\alpha} \;i}) & \Im(\ml{t^{\alpha} \;i}) \\
	 -\Im(\ml{t^{\alpha} \;i}) & \Re(\ml{t^{\alpha} \;i})
	 \end{bmatrix}.
	 \end{equation*}
	 
	 Now $\det(\ml{t^{\alpha}A}) = \mnorm{\ml{t^{\alpha} \; i}}^2 \neq 0$, since $t^{\alpha} i \in \mathbb{C}$ is not a zero of $\ml{z}$. Therefore, the curve $\gamma_{x_0}$ is well-defined and given as
	 \begin{equation*}
	 \img{\gamma_{x_0}} = \{\ml{t^{\alpha}A}^{-1} x_0~| t \geq 0\} \subset \mathbb{R}^n.
	 \end{equation*} 
	 
	 Figure  \ref{fig:gamma-curve-1} shows trajectory $u(t;x_0)$(green),  the curve $\gamma_{x_0}$ (red) and the evolution of various trajectories starting at points of $\gamma_{x_0}$(grey). It is clear that all these trajectories will intersect $x_0$ in finite distinct times. Figure  \ref{fig:gamma-curve-2} shows the evolution of the curve $\gamma_{x_0}$ along the trajectory $u(t;x_0)$(green). The curves $\gamma_q, \gamma_r$ for points $q = u(0.5;x_0)$ and $r=u(1.2;x_0)$ are obtained by trajectorially evolving curve $\gamma_{x_0}$(red) for time $t=0.5$ and $t=1.2$ respectively.
	 
	 \begin{figure}[H]
	 	\centering
	 	\includegraphics[width=0.7\linewidth]{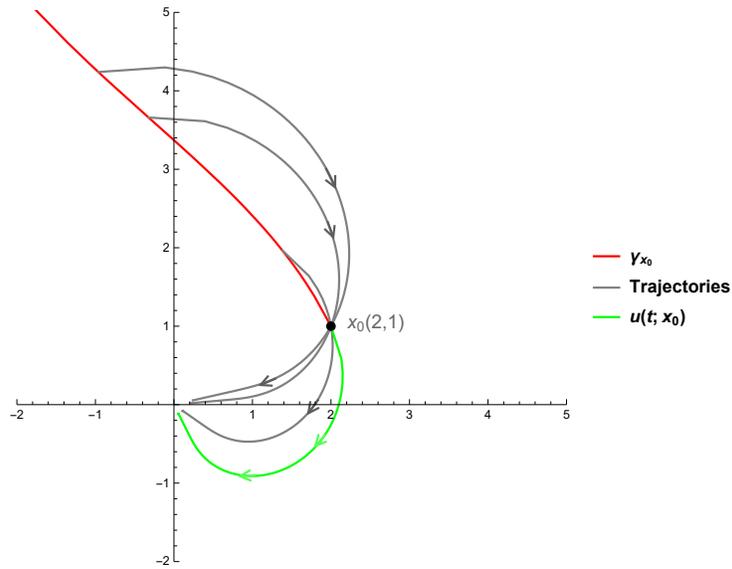}
	 	\caption{Figure shows curve $\gamma_{x_0}$ and evolution of various trajectories starting on $\gamma_{x_0}$.}
	 	\label{fig:gamma-curve-1}
	 \end{figure}
	 \begin{figure}[H]
	 	\centering
	 	\includegraphics[width=0.7\linewidth]{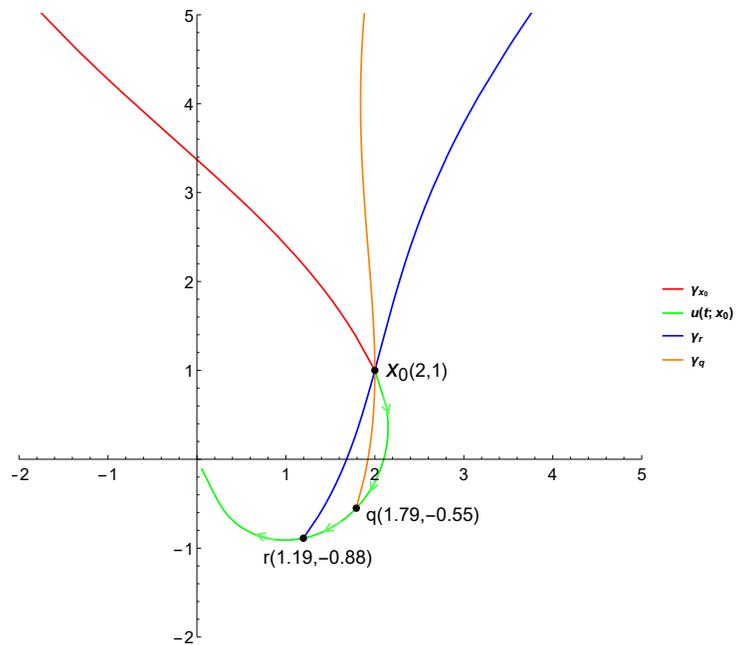}
	 	\caption{Figure shows evolution of the curve $\gamma_{x_0}$ along the trajectory $u(t;x_0)$}
	 	\label{fig:gamma-curve-2}
	 \end{figure}
	 	    
\end{exmp}
   
\textbf{Type II} systems have both EIST and EIDT.
\begin{thm} \label{sametime2}
	Let $T>0, ~ k \in \mathbb{N}$ and $\zeta_{\alpha, k} \in \mathbb{C}$ be a zero of $\ml{z}$.  If $\frac{\zeta_{\alpha, k}}{T^{\alpha}}$ is an eigenvalue of $A$, then
		\begin{enumerate}[(i)]
			\item $\ker\{\ml{T^{\alpha}A}\} \neq \{0\}$; 
			\item for $z_0 \in \ker\{\ml{T^{\alpha}A}\}$ and $p \in \mathrm{Img}\{\ml{T^{\alpha}A}\}$, there exists $x_0 \in \mathbb{R}^n$ such that $u(T; x_0 + z_0) = p$.
		\end{enumerate}
	
\end{thm}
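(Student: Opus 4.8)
The plan is to treat the two assertions separately, each reducing to a result already in hand. Write $\lambda = \zeta_{\alpha,k}/T^{\alpha}$, which is by hypothesis an eigenvalue of $A$, and note throughout that $u(T;y_0) = \ml{T^{\alpha}A}\,y_0$ with $\ml{T^{\alpha}A}$ acting linearly.

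For part (i), I would show directly that $\ml{T^{\alpha}A}$ is singular. Since $T^{\alpha} > 0$ is a positive real scalar, scaling $\zeta_{\alpha,k}$ by $1/T^{\alpha}$ does not alter its argument, so $\arg(\lambda) = \arg(\zeta_{\alpha,k})$. Lemma \ref{invertability} then says exactly that $\ml{T^{\alpha}A}$ fails to be invertible, i.e. $\det(\ml{T^{\alpha}A}) = 0$, which forces $\ker\{\ml{T^{\alpha}A}\} \neq \{0\}$. Alternatively, Lemma \ref{selfc1} exhibits the singularity explicitly: the matrix $\ml{T^{\alpha}A}$ has $\ml{T^{\alpha}\lambda} = \ml{\zeta_{\alpha,k}} = 0$ among its eigenvalues, since $\zeta_{\alpha,k}$ is a zero of $\ml{z}$.

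For part (ii), the key observation is that the kernel component contributes nothing to the time-$T$ map. Using linearity of the Mittag-Leffler operator, I would expand
\[
u(T; x_0 + z_0) = \ml{T^{\alpha}A}(x_0 + z_0) = \ml{T^{\alpha}A}\,x_0 + \ml{T^{\alpha}A}\,z_0 .
\]
Because $z_0 \in \ker\{\ml{T^{\alpha}A}\}$, the final term vanishes, so the desired equation $u(T; x_0 + z_0) = p$ is equivalent to $\ml{T^{\alpha}A}\,x_0 = p$. Since $p \in \img{\ml{T^{\alpha}A}}$ by hypothesis, the definition of the image furnishes an $x_0 \in \mathbb{R}^n$ solving this equation, which completes the argument.

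I expect no serious obstacle: both parts follow immediately from Lemma \ref{invertability} and the linearity of the M-L operator. The only point deserving a moment's care is the interplay between the two parts — the nontriviality of the kernel established in (i) is precisely what makes the perturbation ``$+\,z_0$'' in (ii) nonvacuous, so that $x_0$ and $x_0 + z_0$ are genuinely distinct initial data whose trajectories meet at $p$ at the same time $T$. This is the mechanism producing EIST in \textbf{Type II} systems, and I would emphasize it when stating the conclusion.
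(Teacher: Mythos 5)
Your proof is correct and follows essentially the same route as the paper's: part (i) via Lemma \ref{invertability} (or equivalently the vanishing eigenvalue $\ml{\zeta_{\alpha,k}}=0$ from Lemma \ref{selfc1}), and part (ii) by linearity of $\ml{T^{\alpha}A}$ together with the definition of the image. The only difference is that the paper separately disposes of the degenerate case $\img{\ml{T^{\alpha}A}}=\{0\}$ and checks $x_0+z_0\neq 0$ so that the EIST interpretation is nonvacuous, a point you acknowledge in your closing remark.
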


\begin{proof}
	If $\mathrm{Img}\{\ml{T^{\alpha}A}\} = \{0\}$, then $p=0$, and in view of Theorem \ref{zero2} result follows. Thus, we assume that $p \neq 0$. As $p \in \mathrm{Img}\{\ml{T^{\alpha}A}\}, ~ \exists \;x_0 \in \mathbb{R}^n,~ x_0 \neq 0$, such that $\ml{T^{\alpha}A} x_0 = p$.
	
	 Given that $\frac{\zeta_{\alpha, k}}{T^{\alpha}}$ is an eigenvalue of $A$, by Lemma \ref{invertability} we have $\det(\ml{T^{\alpha}A})=0$ or equivalently $\ker\{\ml{T^{\alpha}A}\} \neq \{0\}$. For a non-zero $z_0 \in \ker\{\ml{T^{\alpha}A}\},$ let $y_0:= x_0 + z_0$. Now $y_0 \neq 0$, since $y_0=x_0 + z_0 = 0$, implies $x_0 \in \ker\{\ml{T^{\alpha}A}\}$, and thus $p = \ml{T^{\alpha}A}x_0 = 0$ which is a contradiction. Further, trajectory $y(t)=u(t;y_0)$ of IVP in \eqref{ivp1} satisfies
	 \begin{equation*}
	 y(T)=\ml{T^{\alpha}A}y_0 = \ml{T^{\alpha}A}(x_0 + z_0) = \ml{T^{\alpha}A}x_0 + \ml{T^{\alpha}A}z_0 = p.
	 \end{equation*} 
\end{proof}

Theorem \ref{sametime2} implies that, due to presence of EIST in \textbf{Type II} systems, all trajectories collapse onto $\img{\ml{T^{\alpha}A}}$ at same time $t=T$(See Example \ref{exmp2} for illustration). Thus, $\img{ \ml{T^{\alpha}A} }$ is the space of all EIST points in \textbf{Type II} system.

Further we analyze \textbf{Type II} systems for EIDT and EIST.  In this section, we assume that $\frac{\zeta_{\alpha, k}}{T^{\alpha}}$  is an eigenvalue of matrix $A$, where $T>0$ is fixed and $\zeta_{\alpha, k} \in \mathbb{C}$ a zero of $\ml{z}$. 

\begin{defn} \label{def8}
	Let $x_0 \in \mathbb{R}^n$ and $x(t) = u(t; x_0), ~ t \geq 0$, be a solution of \textbf{Type II} system described above. Then the inverse curve of $x_0$ is defined as 
	\begin{equation}
	\gamma_{x_0}(t) = \ml{t^{\alpha} A}^{-1} x_0, ~~ t \geq0, ~ t \neq T.
	\end{equation}
\end{defn}

\begin{defn}
	For a \textbf{Type II} system described in Definition \ref{def8} and $x \in \mathbb{R}^n$ we define set  $H_{x, T} =\{y \in \mathbb{R}^n~ \bigg| ~ \ml{T^{\alpha}A} y = x\}.$ In particular
	\begin{equation}
		H_{x, T}=\begin{cases}
 \phi, ~~ &x \notin \img{\ml{T^{\alpha}A}}, \\
x_0 + \ker\{\ml{T^{\alpha}A}\}, ~~&x \in \img{\ml{T^{\alpha}A}} ,
	\end{cases} 
	\end{equation} where $x_0 \in \mathbb{R}^n$ such that $\ml{T^{\alpha} A}x_0 = x$.
\end{defn}

The set  $\bigcup_{t \neq T} \gamma_{x_0}(t) \bigcup H_{x_0, T}$ is collection of all points in $\mathbb{R}^n$ whose trajectories will intersect $x_0$ in finite time.

 \begin{lem} \label{singular1}
 Consider a \textbf{Type II} system as described above, let $q = u(\tilde{T}; x_0)$, where $\tilde{T} > 0$. Then  
 \begin{enumerate}[(i)]
 	\item $\gamma_q(t) = \ml{\tilde{T}^{\alpha}A}\gamma_{x_0}(t),~ t\neq T$. \label{one}
 	\item $H_{q, T} = \ml{\tilde{T}^{\alpha} A}\; H_{x_0, T}, ~ \tilde{T} \neq T$. \label{second}
 \end{enumerate}
\end{lem}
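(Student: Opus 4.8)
The plan is to reduce both claims to the commutativity of Mittag-Leffler operators (Lemma \ref{commutativity}) together with the invertibility of $\ml{\tilde{T}^{\alpha}A}$ for $\tilde{T}\neq T$; the latter is exactly the standing assumption under which the inverse curves and the operator $\ml{\tilde{T}^{\alpha}A}^{-1}$ are defined, and it follows from Lemma \ref{invertability} since $T$ is the singular time of this \textbf{Type II} setup. For part \eqref{one}, I would begin from Definition \ref{def8}, writing $\gamma_q(t) = \ml{t^{\alpha}A}^{-1} q$ for $t\neq T$, and substitute $q = u(\tilde{T};x_0) = \ml{\tilde{T}^{\alpha}A}\,x_0$ to get $\gamma_q(t) = \ml{t^{\alpha}A}^{-1}\ml{\tilde{T}^{\alpha}A}\,x_0$. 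By part (2) of Lemma \ref{commutativity}, $\ml{t^{\alpha}A}^{-1}$ commutes with $\ml{\tilde{T}^{\alpha}A}$, so I may pull the latter to the front: $\gamma_q(t) = \ml{\tilde{T}^{\alpha}A}\,\ml{t^{\alpha}A}^{-1}x_0 = \ml{\tilde{T}^{\alpha}A}\,\gamma_{x_0}(t)$, using the definition of $\gamma_{x_0}$ once more. This settles \eqref{one}.

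For part \eqref{second}, since $\tilde{T}\neq T$ the operator $\ml{\tilde{T}^{\alpha}A}$ is invertible, hence a bijection of $\mathbb{R}^n$. Rather than proving two inclusions separately, I would establish the single equivalence $y\in H_{x_0,T}\iff \ml{\tilde{T}^{\alpha}A}\,y\in H_{q,T}$ and then read off the set identity from bijectivity. Unwinding the right-hand side, $\ml{\tilde{T}^{\alpha}A}\,y\in H_{q,T}$ means $\ml{T^{\alpha}A}\,\ml{\tilde{T}^{\alpha}A}\,y = q = \ml{\tilde{T}^{\alpha}A}\,x_0$; applying part (1) of Lemma \ref{commutativity} to swap the two operators and then cancelling the invertible factor $\ml{\tilde{T}^{\alpha}A}$ reduces this to $\ml{T^{\alpha}A}\,y = x_0$, i.e. $y\in H_{x_0,T}$. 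Because $\ml{\tilde{T}^{\alpha}A}$ is a bijection, this equivalence gives precisely $\ml{\tilde{T}^{\alpha}A}\,H_{x_0,T} = H_{q,T}$.

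The computations are essentially mechanical, so there is no deep obstacle; the points requiring care are where the hypothesis $\tilde{T}\neq T$ actually enters and the degenerate case. The invertibility of $\ml{\tilde{T}^{\alpha}A}$ is used to cancel that operator in both directions of \eqref{second}, which is why $\tilde{T}\neq T$ is assumed there but not in \eqref{one}. I would also note the degenerate situation $x_0\notin\img{\ml{T^{\alpha}A}}$, where $H_{x_0,T}=\phi$: the same equivalence forces $q=\ml{\tilde{T}^{\alpha}A}\,x_0\notin\img{\ml{T^{\alpha}A}}$ (otherwise invertibility of $\ml{\tilde{T}^{\alpha}A}$ would place $x_0$ in the image), so $H_{q,T}=\phi$ as well and the empty-set equality holds trivially. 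Packaging \eqref{second} through the bijection handles this case uniformly alongside the generic one.
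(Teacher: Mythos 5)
Your proof is correct and follows essentially the same route as the paper: part (i) is the identical computation via Definition \ref{def8} and the commutativity in Lemma \ref{commutativity}, and part (ii) is the paper's chain of equivalences written in the inverse direction (the paper shows $y\in H_{q,T}\iff \ml{\tilde{T}^{\alpha}A}^{-1}y\in H_{x_0,T}$, you show $y\in H_{x_0,T}\iff\ml{\tilde{T}^{\alpha}A}\,y\in H_{q,T}$), both hinging on commutativity plus the invertibility of $\ml{\tilde{T}^{\alpha}A}$ for $\tilde{T}\neq T$. Your explicit remark on the empty-set case is a small tidiness improvement but not a different argument.
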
 
\begin{proof}
	Also for $t \neq T$, we have
	\begin{align*}
	\gamma_q(t)&=\ml{t^{\alpha} A}^{-1} q = \ml{t^{\alpha}A}^{-1}\;[\ml{\tilde{T}^{\alpha} A} x_0] \\
	&= \ml{\tilde{T}^{\alpha} A}\;[ \ml{t^{\alpha}A}^{-1}x_0] =\ml{\tilde{T}^{\alpha} A} \gamma_{x_0}(t),
	\end{align*} which proves Part \eqref{one}.
\end{proof}

For $\tilde{T} \neq T$
\begin{align*}
y \in H_{q, T} &\iff \ml{T^{\alpha} A} y = q \iff \ml{T^{\alpha} A} y  = \ml{\tilde{T}^{\alpha} A} x_0 \\
&\iff \ml{\tilde{T}^{\alpha} A}^{-1} \ml{T^{\alpha}A} y = x_0 \\ 
&\iff \ml{T^{\alpha} A} [\ml{\tilde{T}^{\alpha} A}^{-1} y] = x_0.
\end{align*} Thus $\ml{\tilde{T}^{\alpha}A}^{-1} y  \in H_{x_0, T}$, \textit{i.e.} $H_{x_0, T} \neq \phi$. But, this is equivalent to $y \in \ml{\tilde{T}^{\alpha}A} H_{x_0, T}$, which proves Part \eqref{second}.

\begin{lem}
For a \textbf{Type II} system as described above, let $p=u(T;x_0)$. Then  $H_{p, T} = x_0 + \ker\{\ml{T^{\alpha}A}\} $. 
\end{lem}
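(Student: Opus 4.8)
The plan is to recognize this statement as the standard coset description of the solution set of a (singular) linear system, applied to the particular matrix $\ml{T^{\alpha}A}$. First I would abbreviate $M = \ml{T^{\alpha}A}$ and record the hypothesis: since $p = u(T;x_0) = M x_0$, we have $p \in \img{M}$, so $H_{p,T}$ is nonempty and falls into the second case of its defining formula. By definition $H_{p,T} = \{y \in \mathbb{R}^n \mid My = p\}$, and therefore the claim reduces to showing that this solution set equals $x_0 + \ker\{M\}$.

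Next I would establish the two inclusions by a direct computation using the linearity of the matrix $M$. For $x_0 + \ker\{M\} \subseteq H_{p,T}$: take $y = x_0 + z$ with $Mz = 0$; then $My = Mx_0 + Mz = p + 0 = p$, so $y \in H_{p,T}$. For the reverse inclusion $H_{p,T} \subseteq x_0 + \ker\{M\}$: take any $y$ with $My = p$; since also $Mx_0 = p$, subtracting gives $M(y - x_0) = 0$, i.e. $y - x_0 \in \ker\{M\}$, whence $y \in x_0 + \ker\{M\}$. Combining the two inclusions yields $H_{p,T} = x_0 + \ker\{M\}$.

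The only point requiring a moment's care—and the closest thing to an obstacle in an otherwise routine argument—is that the defining formula for $H_{x,T}$ is phrased in terms of an \emph{unspecified} particular solution, so one should confirm that the trajectory's starting point $x_0$ is itself a legitimate such representative. This is immediate from the hypothesis $p = Mx_0$; and since any two particular solutions of $My = p$ differ by an element of $\ker\{M\}$, the coset $x_0 + \ker\{M\}$ is independent of the chosen representative, so no inconsistency with the definition arises. I would also remark that in a \textbf{Type II} system $M$ is singular (by Lemma~\ref{invertability}, since $\zeta_{\alpha,k}/T^{\alpha}$ is an eigenvalue of $A$), so $\ker\{M\} \neq \{0\}$ and the set $H_{p,T}$ is a genuine nontrivial affine subspace; however, this fact is not needed for the equality itself, which follows purely from linearity and holds regardless of whether the kernel is trivial.
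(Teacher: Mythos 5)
Your proposal is correct and follows essentially the same route as the paper: the paper's proof is exactly the chain of equivalences $y \in H_{p,T} \iff \ml{T^{\alpha}A}y = \ml{T^{\alpha}A}x_0 \iff y - x_0 \in \ker\{\ml{T^{\alpha}A}\}$, which is your two-inclusion argument written as biconditionals. Your added remarks on the well-definedness of the coset representative and on the singularity of the matrix are harmless elaborations not present in (and not needed by) the paper's version.
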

\begin{proof}
Note that
\begin{align*}
y \in H_{p, T} &\iff \ml{T^{\alpha} A} y = p \iff \ml{T^{\alpha} A} y  = \ml{T^{\alpha} A} x_0 \\
&\iff \ml{T^{\alpha}A} (y-x_0) = 0 \iff y-x_0 \in \ker\{\ml{T^{\alpha}A}\} \\
&\iff y \in x_0 + \ker\{\ml{T^{\alpha}A}\}.
\end{align*}
\end{proof}

Define set 
\begin{equation} \label{S2}
S= \left[ \bigcup_{\tilde{T} \geq 0} \;\bigcup_{t \neq T}\; \gamma_{x(\tilde{T})}(t)\right] \bigcup \left[ \bigcup_{\tilde{T} \geq 0}\; H_{x(\tilde{T}), T}\right].
\end{equation}
$S$ represents the collection of all points whose trajectories will intersect $u(t;x_0)$. 
 
\begin{exmp}
For $0 < \alpha < 1$,  $T > 0$ and $\zeta_{\alpha, k} \in \mathbb{C}$, a zero of $\ml{z}$, let
\begin{equation*}
A= \begin{bmatrix}
\Re(\frac{\zeta_{\alpha, k}}{T^{\alpha}}) & \Im(\frac{\zeta_{\alpha, k}}{T^{\alpha}}) \\
-\Im(\frac{\zeta_{\alpha, k}}{T^{\alpha}}) & \Re(\frac{\zeta_{\alpha, k}}{T^{\alpha}})
\end{bmatrix}.
\end{equation*} Then solution of IVP in \eqref{ivp1} is given as $u(t;x_0) = \ml{t^{\alpha} A} x_0, ~ t \geq 0,~ x_0 \in \mathbb{R}^n$, where 
\begin{equation*}
\ml{t^{\alpha}A} = 
\begin{bmatrix}
\Re(\ml{\left(\frac{t}{T} \right)^{\alpha} \zeta_{\alpha, k} }) &  \Im(\ml{\left(\frac{t}{T} \right)^{\alpha} \zeta_{\alpha, k} }) \\
-\Im(\ml{\left(\frac{t}{T} \right)^{\alpha} \zeta_{\alpha, k} }) & \Re(\ml{\left(\frac{t}{T} \right)^{\alpha} \zeta_{\alpha, k} })
\end{bmatrix}.
\end{equation*}

For $t=T$, $\ml{T^{\alpha}A} = 0$, and thus $\ker\{\ml{T^{\alpha}A}\} =  \mathbb{R}^n$ and $\img{\ml{T^{\alpha}A}} = \{0\}$. 

Therefore for any $x_0 \neq 0$, $S_{x_0, T} = \phi$. Further $p = u(T;x_0) = 0$, \textit{i.e.} every trajectory will cross origin at time $t=T$. And $S_{0 , T} = x_0 + \ker\{\ml{T^{\alpha}A}\} = \mathbb{R}^n$.  
\end{exmp}

\begin{exmp}\label{exmp2}
For $0 < \alpha < 1$,  $T > 0$ and $\zeta_{\alpha, k} \in \mathbb{C}$ being a zero of $\ml{z}$, let
\begin{equation*}
A= \begin{bmatrix}
\Re(\frac{\zeta_{\alpha, k}}{T^{\alpha}}) & \Im(\frac{\zeta_{\alpha, k}}{T^{\alpha}}) & 0\\
-\Im(\frac{\zeta_{\alpha, k}}{T^{\alpha}}) & \Re(\frac{\zeta_{\alpha, k}}{T^{\alpha}}) & 0 \\
0 & 0 & -1
\end{bmatrix}.
\end{equation*} Then the solution of IVP in \eqref{ivp1} is given as $u(t;x_0) = \ml{t^{\alpha} A} x_0, ~ t \geq 0,~ x_0 \in \mathbb{R}^n$, where 
\begin{equation*}
\ml{t^{\alpha}A} = 
\begin{bmatrix}
\Re(\ml{\left(\frac{t}{T} \right)^{\alpha} \zeta_{\alpha, k} }) &  \Im(\ml{\left(\frac{t}{T} \right)^{\alpha} \zeta_{\alpha, k} }) & 0 \\
-\Im(\ml{\left(\frac{t}{T} \right)^{\alpha} \zeta_{\alpha, k} }) & \Re(\ml{\left(\frac{t}{T} \right)^{\alpha} \zeta_{\alpha, k} }) & 0\\
0 & 0 & \ml{-t^{\alpha}}
\end{bmatrix}.
\end{equation*} Now $\det(\ml{T^{\alpha}A}) = 0$, and 
\begin{align*}
\ker\{\ml{T^{\alpha}A}\} &= \{(x, y, 0)~|~ x, y \in \mathbb{R}\}, \\
\img{\ml{T^{\alpha}A}} &= \{(0, 0, z)~|~z \in \mathbb{R}\}. 
\end{align*} For any $x_0 = (x_1,y_1,z_1) \in \mathbb{R}^3$, $p = u(T;x_0) = \ml{T^{\alpha}A} x_0 = (0, 0, c)$, where $c = \ml{-T^{\alpha}}z_1$. Thus, any trajectory starting on plane $z=z_1$ will intersect $z$-axis at time $t=T$ in the point $p \in \mathbb{R}^3$.  

Thus for any $x_0 = (x_1,y_1,z_1) \in \mathbb{R}^3$, not on $z$-axis, $S_{x_0, T} = \phi$. And for $p=(0, 0, c), ~ c \in \mathbb{R}$ set $S_{p, T} = \{(x_1, y_1, z_1) ~|~ x_1, y_1 \in \mathbb{R}, ~ z_1 = \frac{c}{\ml{-T^{\alpha}}}\}$.
\end{exmp}
\section{Self Intersections}\label{selfintersections}

A constant solution has self intersections. Kaslik \textit{et al.} \cite{kaslik2012nonlinear} have proved that there are no non-constant periodic solutions of class $C^1$ for fractional systems. Although fractional systems can have limit-cycle behavior, whenever $\mnorm{\arg(\lambda)} \geq \frac{\pi \alpha}{2}$ for all eigenvalues $\label{lambda}$ of $A$ and for those eigenvalues which satisfy $ \mnorm{\arg(\lambda)} = \frac{\alpha \pi }{2}$, geometric multiplicity is one \cite{stabsurvey}. Besides these, fractional trajectories can have following \textit{non-regular} types of self-intersections. 

\begin{defn}[See \cite{hilton1920plane}]
	A point $p \in \mathbb{R}^n$ is called as \textbf{point of multiple contact} (multiple point) of a non-constant trajectory $x(t) = u(t;x_0)$ of IVP in \eqref{ivp1} if $x(T) = p$, for some $T > 0$ and
	\begin{equation}
	\frac{d}{dt}x(t)\bigg|_{t = T} = ~E_{\alpha, 0}(T^{\alpha}A) x_0 = 0.
	\end{equation}
\end{defn}

At a multiple point, the trajectory will have two or more tangents. A standard double point is either \textit{cusp} or \textit{node} (see \cite{hilton1920plane}).

\begin{thm} \label{selfthm}
	A non-constant trajectory $x(t) = u(t; x_0)$ of IVP in \eqref{ivp1} has a multiple point at some $x(T) = p \in \mathbb{R}^n, ~ T>0$, if and only if, $x_0 \in \ker\{E_{\alpha, 0}(T^{\alpha}A)\}$ and there exists an eigenvalue $\lambda$ of $A$ which is of the form $\lambda = \frac{\eta_{\alpha, k}}{T^{\alpha}}$, where $\eta_{\alpha, k}, ~ k \in \mathbb{N}$ is a zero of $E_{\alpha, 0}(z)$. 
\end{thm}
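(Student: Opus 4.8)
The plan is to reduce the geometric multiple-point condition to a single linear-algebraic statement about the singularity of the operator $E_{\alpha,0}(T^\alpha A)$, and then to read off the eigenvalue condition from Lemma \ref{selfc1}. By the definition of a multiple point, $x(T)=p$ is a multiple point precisely when $E_{\alpha,0}(T^\alpha A)x_0 = 0$ at some $T>0$. First I would confirm this criterion by differentiating the Mittag-Leffler series term by term: since $x(t) = \ml{t^\alpha A}x_0 = \sum_{k\ge 0} t^{\alpha k}A^k x_0/\Gamma(\alpha k + 1)$, one gets $\frac{d}{dt}x(t) = \frac{1}{t}\sum_{k\ge 1} t^{\alpha k}A^k x_0/\Gamma(\alpha k) = \frac{1}{t}E_{\alpha,0}(t^\alpha A)x_0$, where the $k=0$ term drops out because $\Gamma(0)$ is a pole. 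Since $T>0$, the factor $1/T$ is harmless, so the vanishing of the tangent at $t=T$ is equivalent to $E_{\alpha,0}(T^\alpha A)x_0 = 0$.

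The key tool is an analog of Lemma \ref{invertability} for the operator $E_{\alpha,0}$ rather than $\ml{\cdot}$. Applying Lemma \ref{selfc1} with $\beta = 0$, the eigenvalues of $E_{\alpha,0}(T^\alpha A)$ are exactly $E_{\alpha,0}(T^\alpha\lambda)$ as $\lambda$ ranges over the eigenvalues of $A$. Hence $E_{\alpha,0}(T^\alpha A)$ is singular if and only if $E_{\alpha,0}(T^\alpha\lambda)=0$ for some eigenvalue $\lambda$, which is equivalent to $T^\alpha\lambda = \eta_{\alpha,k}$ being a zero of $E_{\alpha,0}(z)$, i.e. $\lambda = \eta_{\alpha,k}/T^\alpha$. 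This is the exact mirror of the $\zeta_{\alpha,k}$ argument used in Theorem \ref{zero1}, now with $E_{\alpha,0}$ in place of $\ml{\cdot}$.

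With these two ingredients the equivalence is immediate. For the forward direction, a multiple point gives $E_{\alpha,0}(T^\alpha A)x_0 = 0$; since the trajectory is non-constant we have $x_0\neq 0$ (the constant solutions being exactly those with $x_0\in\ker A$, in particular $x_0=0$ gives the constant zero trajectory), so $x_0\in\ker\{E_{\alpha,0}(T^\alpha A)\}$ with this kernel nontrivial. The nontrivial kernel forces $\det E_{\alpha,0}(T^\alpha A)=0$, and by the previous paragraph this produces an eigenvalue of the required form $\lambda = \eta_{\alpha,k}/T^\alpha$. For the converse, the hypothesis $x_0\in\ker\{E_{\alpha,0}(T^\alpha A)\}$ gives $E_{\alpha,0}(T^\alpha A)x_0 = 0$ directly, so the tangent vanishes at $t=T$ and $x(T)=p$ is a multiple point of the (non-constant) trajectory; here the eigenvalue condition is precisely the consistency requirement guaranteeing that $\ker\{E_{\alpha,0}(T^\alpha A)\}$ is nontrivial, so that a nonzero $x_0$ can lie in it.

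I expect the main obstacle to be the careful setup of the $E_{\alpha,0}$-version of Lemma \ref{invertability}, in particular checking that Lemma \ref{selfc1} genuinely applies at $\beta = 0$: one must verify that the $k=0$ term of the series $\sum_{k\ge 0}z^k/\Gamma(\alpha k)$ vanishes because of the pole of $\Gamma$ at $0$, so that both the scalar function $E_{\alpha,0}$ and its matrix eigenvalue version start consistently at $k=1$. The remaining care is bookkeeping: justifying the term-by-term differentiation of the Mittag-Leffler series via uniform convergence on compact $t$-intervals, noting the harmless $1/T$ factor, and confirming that ``non-constant'' supplies exactly the $x_0\neq 0$ needed to pass between the statements $E_{\alpha,0}(T^\alpha A)x_0=0$ and $\ker\{E_{\alpha,0}(T^\alpha A)\}\neq\{0\}$.
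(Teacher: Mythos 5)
Your proposal is correct and follows essentially the same route as the paper: reduce the multiple-point condition to $E_{\alpha,0}(T^{\alpha}A)x_0=0$, use non-constancy to get $x_0\neq 0$ and hence $\det E_{\alpha,0}(T^{\alpha}A)=0$, and invoke Lemma \ref{selfc1} with $\beta=0$ to translate this into the eigenvalue condition $\lambda=\eta_{\alpha,k}/T^{\alpha}$, with the converse by reversing the steps. The extra care you take with the term-by-term differentiation (and the harmless $1/T$ factor) and with the $k=0$ term of $E_{\alpha,0}$ vanishing at the pole of $\Gamma$ is just a more explicit version of what the paper leaves implicit.
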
 
\begin{proof}
	Let $p = \ml{T^{\alpha} A} x_0 \in \mathbb{R}^n$ be a multiple point of trajectory $x(t)$. Hence, $E_{\alpha, 0}(T^{\alpha} A)x_0 = 0$. Then  $x_0 \in \ker\{E_{\alpha, 0}(T^{\alpha}A)\}$ and $x_0 \neq 0$, since $x(t)$ is non-constant. Therefore, $\det(E_{\alpha, 0}(T^{\alpha}A)) = 0$ \textit{i.e.}  $E_{\alpha, 0}(T^{\alpha}A)$ has at least one zero eigenvalue. By Lemma \ref{selfc1}, we get $E_{\alpha, 0}(T^{\alpha}\lambda) = 0$, for some eigenvalue  $\lambda$ of $A$. This proves the implication. The converse is proved by retracing the above steps in reverse direction.
\end{proof}

\begin{remark}
	Recently Bhalekar \textit{et al.} \cite{bhalekar2018self} have numerically found that for a 2-dimensional linear fractional systems, self-intersection occurs in region $\mnorm{\arg(\lambda_{\pm})} = \frac{\alpha \pi}{2} + \epsilon$, for sufficiently small $\epsilon > 0$ and  $\lambda_{\pm}$ being eigenvalues of system. This region comes as a direct consequence of Theorem \ref{selfthm} and the fact that zeros $\eta_{\alpha, k} \in \mathbb{C}$ of $E_{\alpha, 0}(z)$ are located in $\mnorm{\arg(z)} < \frac{\alpha \pi}{2} + \epsilon$, for large enough $k \in \mathbb{N}$ (See the proof of Theorem 4.7 in \cite{gorenflo2014mittag} ). 
\end{remark}

We construct an example of linear fractional system having self intersecting node and cusp using Theorem \ref{selfthm}. 

\begin{exmp} \label{exmpselfc}
	Let $\alpha = \frac{1}{3}$ and $\lambda_1 \approx 2.21095 - i (1.60243)$, $\lambda_2 = 1.47895 + i(1.349246)$ be zeros of $E_{\alpha, 0} (z)$. For $i=1, 2$,  $A_i = \begin{pmatrix}
	\Re(\lambda_i) & \Im(\lambda_i) \\ -\Im(\lambda_i) & \Re(\lambda_i)
	\end{pmatrix}$, and $x_0 = (1, 0)$, consider the IVP in \eqref{ivp1}. Solutions in this case are given as $x_i(t) = \ml{t^{\alpha}A_i} x_0, ~ t \geq 0,~ i=1,2$, where 
	\begin{equation*}
	\ml{t^{\alpha}A_i} = \begin{pmatrix}
	\Re(\ml{t^{\alpha}\lambda_i}) & \Im(\ml{t^{\alpha}\lambda_i}) \\
	- \Im(\ml{t^{\alpha}\lambda_i}) & \Re(\ml{t^{\alpha}\lambda_i})
	\end{pmatrix}.
	\end{equation*}
	These trajectories are plotted in Figure  \ref{fig:finiteloop} confirms the existence of double points having cusp (Figure  \ref{cusp})  and self-intersecting loop (Figure  \ref{loop}) each.   
\begin{figure}[H]
		\begin{subfigure}{0.5\textwidth}
			\centering
			\includegraphics[width=\linewidth]{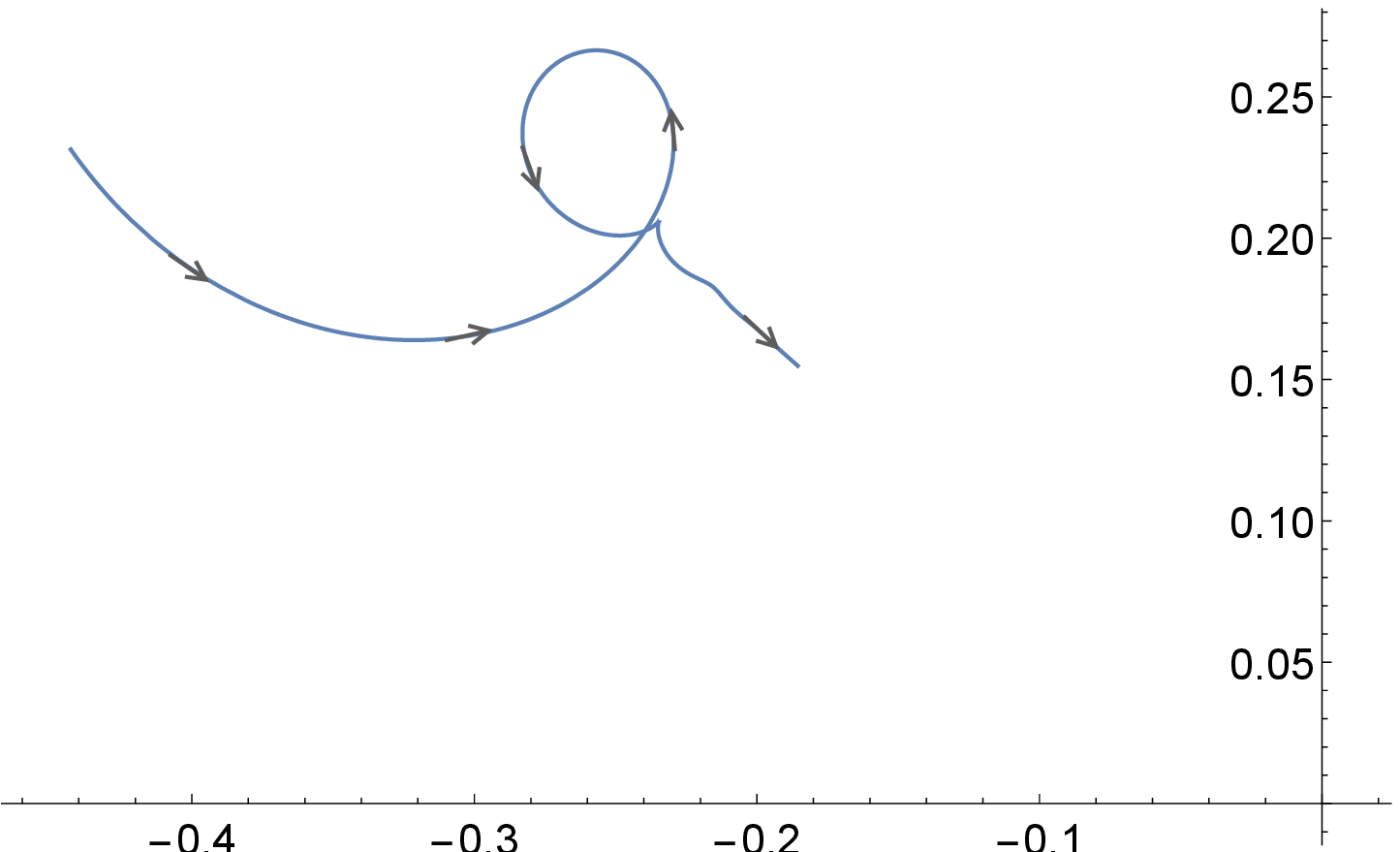}
			\subcaption{$x_1(t)$ for $\frac{1}{2} \leq t \leq 2$}
			\label{loop}
		\end{subfigure}\hspace{1cm}
		\begin{subfigure}{0.5\textwidth}
			\centering
			\includegraphics[width=\linewidth]{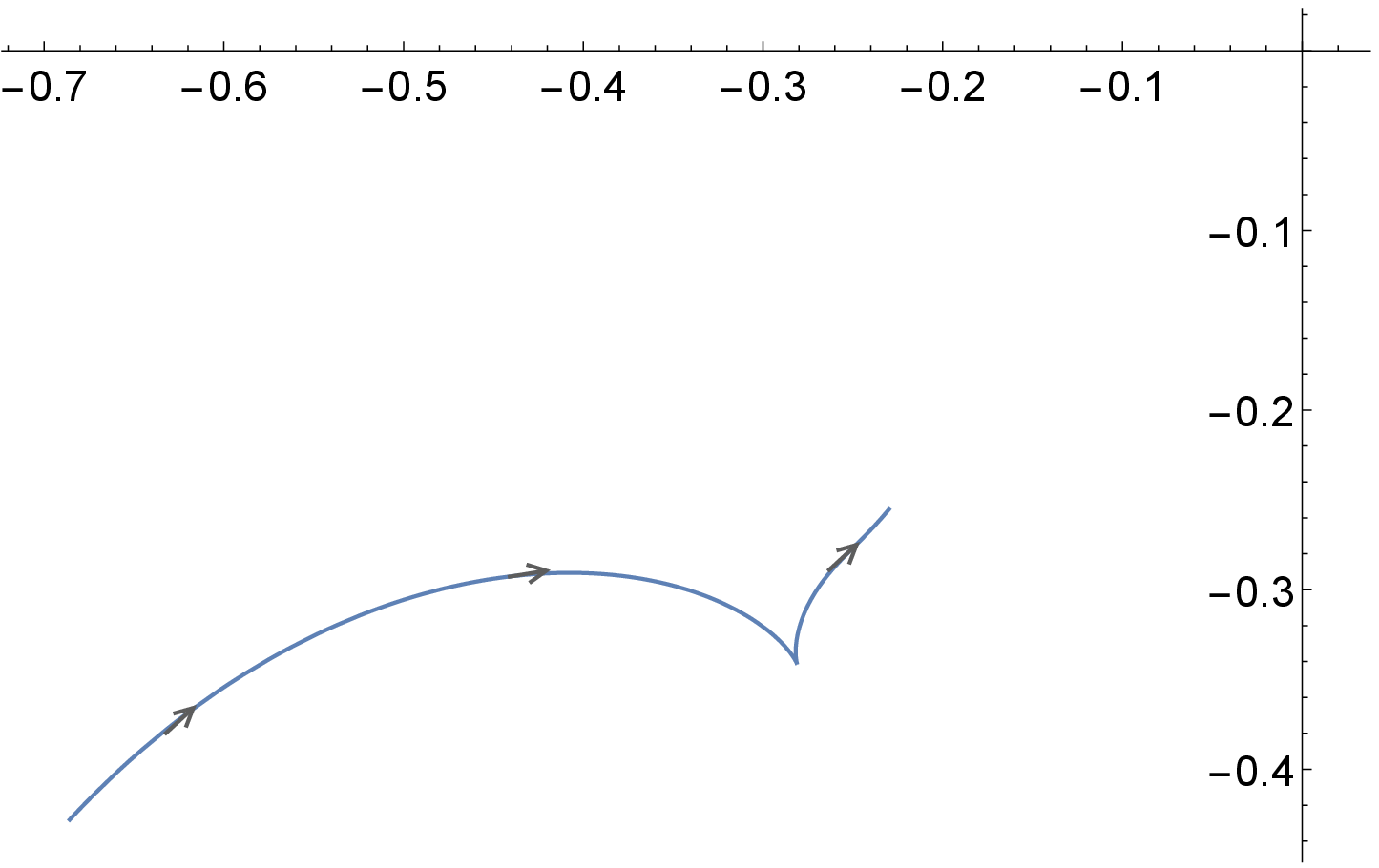}
			\subcaption{$x_2(t)$ for $\frac{1}{2} \leq t \leq 2$}
			\label{cusp}
		\end{subfigure}
\caption{Trajectory $x_1(t), x_2(t)$ of Ex. \ref{exmpselfc} showing self-intersection and cusp.}
\label{fig:finiteloop}
\end{figure}
\end{exmp}

\section{Conclusions and direction of future research}\label{conclusions}
In this article, we have classified trajectorial intersections in linear fractional systems into three broad categories \textit{viz.} external intersections occurring at same time(EIST), external intersections occurring at distinct times(EIDT), and self intersection. We have shown that the system will be free from EIST if and only if each eigenvalue $\lambda$ of a system satisfies $\arg(\lambda) \neq \arg{\zeta_{\alpha}}$, where $\zeta_{\alpha}$ is a zero of $\ml{z}$, which is a generalization of \textit{separation theorem} \cite{diethelm2010analysis} for the case of fractional linear systems. Existence of EIDT is an intrinsic feature of a fractional system. If  $\arg(\lambda) \neq \arg{\zeta_{\alpha}}$ holds, then there is unique trajectory intersecting point $p \in \mathbb{R}^n$ for each time $T > 0$, while if this condition fails, there are points where infinite trajectories intersect at the same time. We have shown that fractional trajectory can have cusps or nodes also. Further,  we have proved that these intersections occur if and only if $\arg(\lambda) = \arg{\eta_{\alpha}}$, where $\lambda$ is a eigenvalue of system and $\eta_{\alpha}$ is a zero of $E_{\alpha, 0 } (z)$. 

Further we would like to investigate whether similar characterization for EIST, EIDT and self intersections in fractional non-linear systems can be given. It would be an interesting question as to whether these features of fractional dynamics can be exploited to model some physical phenomena.

\end{document}